\newcommand{\N}{\ensuremath{\mathbb{N}}\xspace}
\newcommand{\R}{\ensuremath{\mathbb{R}}\xspace}
\newcommand{\eps}{\epsilon}
\renewcommand{\epsilon}{\varepsilon}
\newcommand{\mres}{\mathbin{\vrule height 1.6ex depth 0pt width 0.13ex\vrule height 0.13ex depth 0pt width 1.3ex}} 
\newcommand{\hess}{\nabla ^2}
\newcommand{\palpha}{\partial_\alpha}
\newcommand{\udel}{u_\delta}
\newcommand{\into}{\intop_{\Omega}}
\newcommand{\intb}{\intop_{B_{2r}(x_0)}}
\newcommand{\tilv}{\hat{v}_k}
\newcommand{\dx}{\, dx}
\newcommand{\dz}{\, dz}
\def\Yint#1{\mathchoice
    {\YYint\displaystyle\textstyle{#1}}%
    {\YYint\textstyle\scriptstyle{#1}}%
    {\YYint\scriptstyle\scriptscriptstyle{#1}}%
    {\YYint\scriptscriptstyle\scriptscriptstyle{#1}}%
      \!\int}
\def\YYint#1#2#3{{\setbox0=\hbox{$#1{#2#3}{\int}$}
    \vcenter{\hbox{$#2#3$}}\kern-.72\wd0}}
\def\mint{\;\;\Yint -}
\def\Zint#1{\mathchoice
    {\ZZint\displaystyle\textstyle{#1}}%
    {\ZZint\textstyle\scriptstyle{#1}}%
    {\ZZint\scriptstyle\scriptscriptstyle{#1}}%
    {\ZZint\scriptscriptstyle\scriptscriptstyle{#1}}%
      \!\int}
\def\ZZint#1#2#3{{\setbox0=\hbox{$#1{#2#3}{\int}$}
    \vcenter{\hbox{$#2#3$}}\kern-.52\wd0}}
\def\txtmint{\;\;\Zint -}
\begin{document}

\numberwithin{equation}{section}
\newtheoremstyle{break}{15pt}{15pt}{\itshape}{}{\bfseries}{}{\newline}{}
\theoremstyle{break}
\newtheorem*{Satz*}{Theorem}
\newtheorem*{Rem*}{Remark}
\newtheorem*{Lem*}{Lemma}
\newtheorem{Satz}{Theorem}[section]
\newtheorem{Rem}{Remark}[section]
\newtheorem{Lem}{Lemma}[section]
\newtheorem{Prop}{Proposition}[section]
\newtheorem{Cor}{Corollary}[section]
\theoremstyle{definition}
\newtheorem{Def}[Satz]{Definition}

\parindent2ex

\newenvironment{rightcases}
  {\left.\begin{aligned}}
  {\end{aligned}\right\rbrace}

\begin{center}{\Large \bf A higher order TV-type variational problem related to the denoising and inpainting of images}
\end{center}
\begin{center}{\large \em Dedicated to Nicola Fusco on the occasion of his $60^{th}$ birthday}
\end{center}
\begin{center}
 M. Fuchs, J. M\"uller
\end{center}

\noindent AMS Subject Classification:  49 Q 20, 49 N 60, 49 N 15, 26 B 30, 62 H 35\\
Keywords: higher order bounded variation, variational image inpainting, TV-regularization, partial regularity

\begin{abstract}
We give a comprehensive survey on a class of higher order variational problems which are motivated by applications in mathematical imaging. The overall aim of this note is to investigate if and in which manner results from the first author's previous work on variants of the  TV-regularization model (see e.g. \cite{BF1}, \cite{BF2}, \cite{BF3} and \cite{FT}) can be extended to functionals which involve higher derivatives. This seems to be not only of theoretical interest, but also relevant to applications  since higher order TV-denoising appears to maintain the advantages of the classical model as introduced by Rudin, Osher and Fatemi in \cite{ROF} while avoiding the unpleasant ''staircasing'' effect (see e.g. \cite{BKP} or \cite{LLT}). Our paper features results concerning generalized solutions in spaces of functions of higher order bounded variation, dual solutions as well as   partial regularity of minimizers. 
\end{abstract}

\begin{section}{Introduction}
Among the fundamental contributions of Nicola Fusco to the basic problems in the Calculus of Variations such as the question of (partial) regularity of weak solutions and the lower semicontinuity of variational integrals in various settings one will also find a number of important results addressing the popular field of free discontinuity problems as they for example occur in image analysis through the study of the Mumford-Shah functional. Without being complete we refer to the papers \cite{AFF}, \cite{FF} and advise the reader to consult chapters 6,7 and 8 of the monograph \cite{AFP} for more information on the subject and for further references.

In our note we concentrate on a particular aspect of image analysis, namely the simultaneous denoising and inpainting of images based on variants of the TV-regularization originally proposed by Rudin, Osher and Fatemi \cite{ROF} which in its simplest form consists in the problem of minimizing the functional
\begin{align}\label{1.1}
I[u]:=\intop_{\Omega}|\nabla u|+\frac{\lambda}{2}\intop_{\Omega-D}|u-f|^2\dx
\end{align}
among functions $u:\Omega\rightarrow\R^M$ with finite total variation (see e.g. \cite{Giu} or \cite{AFP}), where $\Omega$ denotes a bounded Lipschitz domain in $\R^n$, $n\geq 2$. In (\ref{1.1}) $D$ stands for a fixed subset of $\Omega$, and the given function $f:\Omega-D\rightarrow\R^M$ represents the observed data, which might be noisy. The idea is that on the set $D$ no observation is possible, so that by minimizing $I$ this missing observation can be retrieved from the measurement $f$ on $\Omega-D$ combined with a simultaneous denoising forced by the regularizing term $\intop_{\Omega}|\nabla u|$. The quantity $\intop_{\Omega-D}|u-f|^2\dx$ measures the quality of data fitting, and $\lambda$ denotes a positive parameter being under our disposal. In case $D=\emptyset$ the minimization of $I$ reduces to pure denoising, whereas for $D\neq\emptyset$ an inpainting procedure is incorporated. A clear interpretation of the above terminology can be given in the context of greyvalued images for which $n=2$, $M=1$ and $f(x)\in [0,1]$, $x\in\Omega-D$, measures the intensity of the grey level of the observed image. However, certain applications (see \cite{BC}, \cite{SS}) suggest to study even the higher dimensional case together with vectorial data.

From the analytical point of view the quantity $\intop_{\Omega}|\nabla u|$ for obvious reasons is rather unpleasant. At the same time - with regard to practical aspects - it seems to be desirable to keep the linear growth of the regularization term with respect to $|\nabla u|$, and as a compromise we replaced in the papers \cite{BF1}, \cite{BF2}, \cite{BF3} and \cite{FT} the functional $I$ from (\ref{1.1}) with the energy 
\begin{align}\label{1.2}
\tilde{I}[u]:=\intop_{\Omega}F(\nabla u)+\frac{\lambda}{2}\intop_{\Omega-D}|u-f|^2\dx
\end{align}
for a strictly convex density $F$ being of linear growth. A typical example (also of computational importance) is given by the formula
\begin{align}\label{1.3}
F(Z):=\Phi_\mu(|Z|),
\end{align}
where for $\mu>1$ we have set
\begin{align}\label{1.4}
\Phi_\mu(r):=\intop_0^r\intop_0^s(1+t)^{-\mu}\,dt\,ds,\;r\geq 0.
\end{align}
Note that we have the identities
\begin{align}\label{1.5}
\begin{cases}
 \Phi_\mu(r)=\frac{1}{\mu-1}r+\frac{1}{\mu-1}\frac{1}{\mu-2}(r+1)^{-\mu+2}-\frac{1}{\mu-1}\frac{1}{\mu-2},\;\mu\neq 2,\\
 \,\\
 \Phi_2(r)=r-\ln(1+r),\;r\geq 0,
\end{cases}
\end{align}
from which we deduce that $\Phi_\mu$ approximates the TV-density in the sense that
\begin{align}\label{1.6}
(\mu-1)\Phi_\mu(|Z|)\rightarrow|Z|\text{ as }\mu\rightarrow\infty.
\end{align}
It should be clear that the functional $\tilde{I}$ from (\ref{1.2}) has to be studied on the space  $BV(\Omega,\R^M)\cap L^2(\Omega-D,\R^M)$ (see e.g. \cite{Giu} or \cite{AFP}) with the interpretation of $\intop_{\Omega}F(\nabla u)$ as a convex function of a measure as introduced for example in \cite{DT}. Concerning the variational problem
\begin{align}\label{1.7}
\tilde{I}\rightarrow\min\text{ in }BV(\Omega,\R^M)\cap L^2(\Omega-D,\R^M)
\end{align}
we obtained the following results (compare e.g. \cite{BF1}, \cite{BF2}, \cite{BF3}, \cite{FT} and \cite{Ti}): 

\noindent\textit{Existence and uniqueness}:

Problem (\ref{1.7}) admits at least one solution being unique on $\Omega-D$. Moreover, the absolutely continuous part $\nabla^a u$ of $\nabla u$ with respect to Lebesgue's measure is uniquely determined. Any solution of problem (\ref{1.7}) occurs as a limit of an $\tilde{I}$-minimizing sequence from the space $W^{1,1}(\Omega,\R^M)\cap L^2(\Omega-D,\R^M)$. Here and in what follows we use the symbol $W^{m,p}(\Omega)$ for the standard Sobolev space (see \cite{Ad}) which is normed by 
\[
\|u\|_{m,p;\Omega}:=\sum_{k=0}^m\|\nabla^k u\|_{p;\Omega} 
\]
with $\nabla^0u:=u$ and $\|\,\cdot\,\|_{p;\Omega}$ denoting the $p$-norm. 

\noindent\textit{Duality}:

The problem dual to (\ref{1.7}) has a unique solution $\sigma\in L^\infty(\Omega,\R^{nM})$. The duality formula $\sigma=DF(\nabla^au)$ holds almost everywhere on $\Omega$.

\noindent\textit{Regularity}:

Any solution $u$ of problem (\ref{1.7}) is of class $C^1$ on an open subset of $\Omega$ whose complement is of Lebesgue measure zero. In the particular case of the density (\ref{1.3}) with $\Phi_\mu$ from (\ref{1.4}), (\ref{1.5}) (or for integrands of similar type) we have full interior regularity provided we assume $\mu<2$.

Apart from the first order TV-model and its extensions described above, higher order variants of the TV-model seem to be not only of theoretical interest as it is for example outlined in the recent paper \cite{BKP}. Roughly speaking, the functionals $I$ and $\tilde{I}$ from (\ref{1.1}) and (\ref{1.2}) are now replaced by the expressions
\begin{align*}
I^m[u]:=\intop_{\Omega}|\nabla^m u|+\frac{\lambda}{2}\intop_{\Omega-D}|u-f|^2\dx
\end{align*}
and
\begin{align*}
\tilde{I}^m[u]:=\intop_{\Omega}F(\nabla^mu)+\frac{\lambda}{2}\intop_{\Omega-D}|u-f|^2\dx,
\end{align*}
respectively, where $m\geq 2$ denotes some fixed integer. The functions $u:\Omega\rightarrow\R^M$ are taken from the space of {\em functions with $m$-th order bounded variation} defined by  
\begin{align*}
 BV^m(\Omega):=\big\{u\in W^{m-1,1}(\Omega)\,:\,\nabla^{m-1}u\in BV\big(\Omega,S^{m-1}(\R^n)\big)\big\},
\end{align*}
which means that a function $u$ is of $m$-th order bounded variation if and only if $u$ belongs to the Sobolev class $W^{m-1,1}(\Omega)$ with the additional property that the tensor of the $m$-th order generalized derivatives is a tensor valued Radon measure of finite total variation denoted by $|\nabla^m u|(\Omega)<\infty$. Here $S^m(\R^n)$ denotes the space of $m$-linear \textit{symmetric} maps $(\R^n)^m\rightarrow\R$. For notational simplicity we assume from now on that $M=1$.

The purpose of our note is the extension of the previously indicated results valid for the first order TV-model and its modifications formulated in (\ref{1.7}) to the higher order setting. To do so, we first fix our assumptions. In  what follows $\Omega$ always denotes a bounded Lipschitz domain in $\R^n$, $n\geq 2$. We consider either the case $D=\emptyset$ (''pure denoising'') or we assume that $D$ is a nonvoid subset of $\Omega$. In contrast to the first order case it might not be enough to require the set $D$ to be simply measurable: whenever the dimension $n$ of the ambient space $\R^n$ exceeds $2m$, the additional constraint $u\in L^2(\Omega-D)$ becomes nontrivial in the sense that the integrability does not follow from embedding theorems anyway. The same problem may occur when we consider more general data-fitting terms as considered in \cite{MT}. In both cases, our techniques rely on the following density result:

\begin{Satz}\label{Thm1.3}
\label{dens}
Let $\Omega\subset\R^n$ be a bounded Lipschitz domain and $D\subsetneq\Omega$ an open subset such that $\Omega-D$ has Lipschitz boundary as well. Then it holds for $1\leq p,q<\infty$:
\begin{enumerate}[(a)]
 \item  Any function $u\in W^{m,p}(\Omega)\cap L^q(\Omega-D)$ can be approximated by a sequence of smooth functions $(\varphi_k)_{k=1}^\infty\subset C^\infty(\overline{\Omega})$ such that
\begin{align*}
 \|u-\varphi_k\|_{m,p;\Omega}+\|u-\varphi_k\|_{q;\Omega-D}\rightarrow 0\text{ for }k\rightarrow\infty.
\end{align*}
 \item Any function $u\in BV^m(\Omega)\cap L^q(\Omega-D)$ can be approximated by a sequence of smooth functions $(\varphi_k)_{k=1}^\infty\subset C^\infty(\overline{\Omega})$ such that
\begin{align*}
  \|u-&\varphi_k\|_{m-1,1;\Omega}+\|u-\varphi_k\|_{q;\Omega-D}+\Bigl||\nabla^mu|(\Omega)-\intop_{\Omega}|\nabla^m\varphi_k|\dx\Bigr|\\
&+\Bigl|\sqrt{1+|\nabla^mu|^2}(\Omega)-\intop_{\Omega}\sqrt{1+|\nabla^m\varphi_k|^2}\dx\Bigr|\rightarrow 0\quad\text{ for }\quad k\rightarrow\infty.
\end{align*}
\end{enumerate}
\end{Satz}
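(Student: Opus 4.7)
The plan is to prove both parts by combining a partition-of-unity/mollification argument with a careful choice of translation directions that simultaneously respect the two Lipschitz boundaries $\partial\Omega$ and $\partial D\cap\Omega$.

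For part (a), I first choose a finite open cover $\{B_0,B_1,\dots,B_N\}$ of $\overline\Omega$, where $\overline{B}_0\subset\Omega$ and each $B_i$ ($i\geq 1$) is a small ball centered at a point of $\partial\Omega\cup(\partial D\cap\Omega)$. Using the Lipschitz assumptions, to each $B_i$ I attach a unit vector $e_i\in\R^n$ such that, for all sufficiently small $t>0$, the shift $x\mapsto x+te_i$ maps $\overline{B}_i\cap\overline\Omega$ into $\Omega$; moreover, whenever $B_i$ meets $\overline{\Omega-D}$, $e_i$ is chosen so that $(\overline{B}_i\cap\overline{\Omega-D})+te_i\subset\Omega-D$ as well (this is possible because both domains satisfy a uniform interior cone condition, with a common admissible direction at each joint boundary point). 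With a subordinate partition of unity $\{\eta_i\}$, I decompose $u=\sum_i\eta_i u$ and approximate each piece by
\begin{align*}
\varphi_{i,t,\epsilon}(x):=\int_{\R^n}\rho_\epsilon(x-y)\,(\eta_i u)(y-te_i)\,dy,
\end{align*}
where $\rho_\epsilon$ is a standard mollifier and $0<\epsilon\ll t\ll 1$. Then $\varphi_{i,t,\epsilon}\in C^\infty(\overline\Omega)$; convergence in $W^{m,p}(\Omega)$ is the classical $L^p$-continuity of translation combined with mollifier convergence, while convergence in $L^q(\Omega-D)$ follows the same way, the key point being that for $x\in\Omega-D$ the integrand only probes $u$ at points of $\Omega-D$ thanks to the choice of $e_i$. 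Summing over $i$ yields the required sequence $\varphi_k$.

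For part (b), I apply the same construction to $u\in BV^m(\Omega)\cap L^q(\Omega-D)$. Convergence in $W^{m-1,1}(\Omega)$ and in $L^q(\Omega-D)$ follows exactly as in (a). The convergence $\int_\Omega|\nabla^m\varphi_k|\,dx\to|\nabla^mu|(\Omega)$ is obtained by applying the classical Anzellotti--Giaquinta smoothing result to the $BV$-tensor field $\nabla^{m-1}u$: translation plus mollification produces $W^{m,1}$-functions whose $\nabla^m$-integrals are bounded above by $|\nabla^mu|(\Omega)$, while the reverse inequality comes from the lower semicontinuity of the total variation under $L^1$-convergence. Finally, the convergence of the area-type term is a direct application of Reshetnyak's continuity theorem: since the $S^m(\R^n)$-valued measures $\nabla^m\varphi_k\,\mathcal{L}^n$ converge weakly-$*$ to the measure $\nabla^mu$ with equal total variations (the strict convergence just obtained), the integral of any continuous integrand of linear growth against $\nabla^m\varphi_k\,\mathcal{L}^n$ converges to the corresponding convex-function-of-measure value applied to $\nabla^mu$; in particular this applies to the area integrand $F(Z)=\sqrt{1+|Z|^2}$.

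The main obstacle I anticipate is geometric rather than analytic: coordinating the two Lipschitz surfaces $\partial\Omega$ and $\partial D\cap\Omega$ near their common intersection, so that a single direction $e_i$ sends the relevant balls simultaneously into $\Omega$ and into $\Omega-D$. Once this geometric setup is arranged, each of the four convergences in (b) reduces to standard facts about translations, convolutions, strict convergence under mollification of $BV$-measures, and Reshetnyak continuity.
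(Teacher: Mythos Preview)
Your overall strategy---translate each localized piece in a direction that pushes simultaneously into $\Omega$ and into $\Omega-D$, then mollify---has a genuine geometric gap that you yourself flag but do not resolve. The claim that ``both domains satisfy a uniform interior cone condition, with a common admissible direction at each joint boundary point'' is false in general. Take for instance $\Omega$ to be (locally near the origin) the complement of the closed fourth quadrant, a $270^\circ$ wedge; its uniform Lipschitz cone $C_\Omega$ is exactly the open second quadrant. Now choose $D$ so that $\Omega-D$ is (locally) the thin wedge $\{x\leq y\leq 2x,\ x\geq 0\}$, which is Lipschitz with interior cone $C_{\Omega-D}$ contained in the first quadrant. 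Then $C_\Omega\cap C_{\Omega-D}=\emptyset$, and one checks directly that no single direction $e$ can send $\overline B\cap\overline\Omega$ into $\Omega$ and $\overline B\cap\overline{\Omega-D}$ into $\Omega-D$ for a small ball $B$ around the origin. The paper avoids this difficulty by a different reduction: it first uses the extension operator for the Lipschitz domain $\Omega$ to extend $u$ to $W^{m,p}(\R^n)$, so that one may assume $\overline{\Omega-D}\Subset\Omega$ and only the single Lipschitz surface $\partial(\Omega-D)$ has to be dealt with; the Meyers--Serrin type partition of unity is then run with translations adapted to $\partial(\Omega-D)$ alone.

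There is a second, smaller gap in your argument for part (b): strict convergence of $\nabla^m\varphi_k\,\mathcal L^n$ to $\nabla^m u$ does \emph{not} imply convergence of $\int_\Omega\sqrt{1+|\nabla^m\varphi_k|^2}\,dx$ via Reshetnyak. A one-dimensional counterexample is $g_k(x)=1+\sin(2\pi kx)$ on $(0,1)$: then $g_k\,dx\to dx$ strictly, yet $\int_0^1\sqrt{1+g_k^2}\,dx$ converges to the average $\tfrac{1}{2\pi}\int_0^{2\pi}\sqrt{1+(1+\sin\theta)^2}\,d\theta$, which differs from $\sqrt 2$. Reshetnyak continuity for linear-growth integrands requires the stronger \emph{area-strict} convergence as a hypothesis, which is precisely what you want to conclude. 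The paper instead shows directly that the absolutely continuous parts $(\nabla^m u_\delta)^a$ converge in $L^1$ to $(\nabla^m u)^a$ (by careful bookkeeping of the product-rule error terms and their cancellation under $\sum_j\eta_j\equiv 1$), from which both the singular-part convergence and the area convergence follow.
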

Here, the term $\sqrt{1+|\nabla^m u|^2}(\Omega)$ has to be understood in the sense of convex functions of a measure; we once again refer to \cite{DT}. The reader should note that our hypotheses imposed on $\Omega$ and $D$ in particular imply that $\Omega-D$ is a set of positive Lebesgue measure, since the usual definition of ''Lipschitz boundary'' (cf., e.g. \cite{Ad}, 4.5, p. 66) requires the set to have nonempty interior.
\begin{Rem}
 In the earlier work \cite{Mü}, this theorem has been proved under stronger restrictions on the geometry of $\Omega$ and $D$. Nonetheless, quite recently we succeeded in generalizing this result towards the above assumptions. The proof is given in the subsequent section.
\end{Rem}
For $m\geq 2$ we consider a density $F:S^m(\R^n)\rightarrow[0,\infty)$ of class $C^1(S^m(\R^n))$ satisfying the following assumptions
\begin{align}
 &F \text{ is strictly convex and (w.l.o.g.) } F(0)=0,\label{1.8}\\ 
 &|DF(Z)|\leq\nu_1,\label{1.9}\\ 
 &F(Z)\geq\nu_2|Z|-\nu_3 \label{1.10}
\end{align}
with constants $\nu_1,\nu_2>0$, $\nu_3\in\R$, for all $Z\in S^m(\R^n)$. In accordance with (\ref{1.9}) and $F(0)=0$ we directly get
\begin{align*}
 F(Z)\leq \nu_1 |Z|
\end{align*}
for all $Z\in S^m(\R^n)$, and this inequality together with (\ref{1.10}) shows that $F$ is of linear growth in the sense that
\begin{align}\label{1.11}
 \nu_2|Z|-\nu_3\leq F(Z)\leq \nu_1|Z|,\;Z\in S^m(\R^n).
\end{align}
Note that our example from (\ref{1.3})-(\ref{1.5}) satisfies (\ref{1.8})-(\ref{1.11}). Suppose further that we are given a function $f$ at least of class $L^2(\Omega-D)$ and let $\lambda$ as before (see (\ref{1.1}) and (\ref{1.2})) denote a positive parameter. With these data we introduce the problem 
\begin{align}\label{1.12}
\begin{split}
 &J^m[u]:=\intop_{\Omega}F(\nabla^m u)\dx+\frac{\lambda}{2}\intop_{\Omega-D}|u-f|^2 \dx\rightarrow\min\\
&\hspace{6,5cm}\text{ in }W^{m,1}(\Omega)\cap L^2(\Omega-D), 
\end{split}
\end{align}
which due to the non-reflexivity of the Sobolev space $W^{m,1}(\Omega)$ admits in general no solution. As in the first order case we therefore pass to a suitable relaxed version: for functions $w$ from the space $BV^m(\Omega)\cap L^2(\Omega-D)$ we let
\begin{align}\label{1.13}
 K[w]:=\intop_\Omega F(\mu^a)\dx+\intop_{\Omega} F^\infty\left(\frac{\mu^s}{|\mu^s|}\right)d|\mu^s|+\frac{\lambda}{2}\intop_{\Omega-D}|w-f|^2\dx,
\end{align}
where we have abbreviated
\begin{align*}
 F^\infty:S^m(\R^n)\rightarrow[0,\infty),\;F^\infty(Z):=\lim_{t\rightarrow\infty}\frac{F(tZ)}{t},\; Z\in S^m(\R^n).
\end{align*}
$F^\infty$ is known as the recession function of the density $F$. In formula (\ref{1.13}) the symbol $\mu$ just denotes the measure $\nabla^mw$, for which we have the Lebesgue decomposition $\mu=\mu^a(\mathcal{L}^n\mres\Omega)+\mu^s$ with $\mu^a\in L^1(\Omega,S^m(\R^n))$ and $\mu^s\perp(\mathcal{L}^n\mres\Omega)$, $\mathcal{L}^n\mres\Omega$ denoting the restriction to $\Omega$ of Lebesgue's measure. Now we can state our first result concerning the relaxed version of problem (\ref{1.12}).

\begin{Satz}\label{Thm1.1}
 Let the  assumptions concerning $\Omega$ and $D$ stated in Theorem \ref{Thm1.3} hold and let $F$ satisfy (\ref{1.8})-(\ref{1.10}). Then it holds:
\begin{enumerate}[(a)]
 \item The problem
  \begin{align}\label{1.14}
   K\rightarrow\min\text{ in }BV^m(\Omega)\cap L^2(\Omega-D)
  \end{align}
   with $K$ from (\ref{1.13}) admits at least one solution $u$.
 \item Suppose that $u$ and $\tilde{u}$ are solutions of (\ref{1.14}). Then $u=\tilde{u}$ almost everywhere on $\Omega-D$ and $(\nabla^mu)^a=(\nabla^m\tilde{u})^a$ almost everywhere on $\Omega$.
 \item We have
   \begin{align*}
    \underset{W^{m,1}(\Omega)\cap L^2(\Omega-D)}{\inf}J^m=\underset{BV^m(\Omega)\cap L^2(\Omega-D)}{\inf}K
   \end{align*}
 where the functional $J^m$ is defined in (\ref{1.12}).
 \item Let $\mathcal{M}$ denote the set of all $W^{m-1,1}$-cluster points of $J^m$-minimizing sequences from the space $W^{m,1}(\Omega)\cap L^2(\Omega-D)$. Then $\mathcal{M}$ coincides with the set of all $K$-minimizers in $BV^m(\Omega)\cap L^2(\Omega-D)$.
 \item If $\mathcal{M}$ contains a function $u$ from the space $W^{m,1}(\Omega)\cap L^2(\Omega-D)$, then it holds $\mathcal{M}=\{u\}$.
\end{enumerate}
\end{Satz}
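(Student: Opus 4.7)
Parts (a) and (b) follow from the direct method. Take a $K$-minimizing sequence $(u_k)\subset BV^m(\Omega)\cap L^2(\Omega-D)$; the coercivity estimate \eqref{1.10} bounds $|\nabla^m u_k|(\Omega)$ and the fidelity term bounds $\|u_k\|_{L^2(\Omega-D)}$. A Poincar\'e-type inequality in $BV^m(\Omega)$ modulo polynomials of degree less than $m$, combined with the $L^2$-control on the set $\Omega-D$ (which has positive Lebesgue measure), upgrades this to a uniform $W^{m-1,1}(\Omega)$-bound. Passing to a subsequence, $u_k\to u$ strongly in $W^{m-1,1}(\Omega)$, $\nabla^m u_k$ converges weakly$^*$ as a measure on $\Omega$, and $u_k\rightharpoonup u$ weakly in $L^2(\Omega-D)$. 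Convex-of-measure lower semicontinuity for linear-growth integrands (cf.\ \cite{DT}) and the weak $L^2$-lower semicontinuity of the fidelity term yield $K[u]\leq\liminf_k K[u_k]$, proving (a). For (b), strict convexity of $F$ and of $|\cdot-f|^2$ applied to the convex combination $(u+\tilde u)/2$ of two minimizers force $(\nabla^m u)^a=(\nabla^m\tilde u)^a$ almost everywhere on $\Omega$ and $u=\tilde u$ almost everywhere on $\Omega-D$.

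For (c) and (d), Theorem \ref{Thm1.3}(b) is the key input. Since $K$ and $J^m$ coincide on $W^{m,1}(\Omega)\cap L^2(\Omega-D)$, one immediately gets $\inf_{BV^m\cap L^2}K\leq\inf_{W^{m,1}\cap L^2}J^m$. Conversely, given any $w\in BV^m(\Omega)\cap L^2(\Omega-D)$, the density theorem yields $\varphi_k\in C^\infty(\overline\Omega)$ converging to $w$ in $W^{m-1,1}(\Omega)$ and in $L^2(\Omega-D)$, together with the area-strict convergence $\sqrt{1+|\nabla^m\varphi_k|^2}(\Omega)\to\sqrt{1+|\nabla^m w|^2}(\Omega)$. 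Invoking Reshetnyak's continuity theorem for linear-growth convex integrands gives
\[
\intop_\Omega F(\nabla^m\varphi_k)\dx\to \intop_\Omega F(\mu^a)\dx+\intop_\Omega F^\infty\!\left(\frac{\mu^s}{|\mu^s|}\right)d|\mu^s|, \quad\mu=\nabla^m w,
\]
while the fidelity term converges by $L^2$-convergence. Thus $J^m[\varphi_k]\to K[w]$, which proves (c) and, applied to a $K$-minimizer $w$, exhibits $w$ as a $W^{m-1,1}$-cluster point of a $J^m$-minimizing sequence, hence $w\in\mathcal M$. Conversely, any $W^{m-1,1}$-cluster point $u$ of a $J^m$-minimizing sequence $(u_k)\subset W^{m,1}\cap L^2$ satisfies $K[u_k]=J^m[u_k]\to\inf J^m=\inf K$; the lower semicontinuity used in (a) then shows that $u$ is a $K$-minimizer, establishing (d).

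For (e), let $u\in W^{m,1}(\Omega)\cap L^2(\Omega-D)\cap\mathcal M$ and let $\tilde u$ be any other $K$-minimizer; by (d), $u$ is itself a $K$-minimizer. Part (b) gives $(\nabla^m\tilde u)^a=\nabla^m u$ and $u=\tilde u$ a.e.\ on $\Omega-D$, so
\[
\intop_\Omega F^\infty\!\left(\frac{(\nabla^m\tilde u)^s}{|(\nabla^m\tilde u)^s|}\right)d|(\nabla^m\tilde u)^s|=K[\tilde u]-K[u]=0.
\]
The lower bound \eqref{1.10} forces $F^\infty(Z)\geq\nu_2|Z|$, so $|(\nabla^m\tilde u)^s|(\Omega)=0$; hence $\tilde u\in W^{m,1}(\Omega)$ and $\nabla^m(u-\tilde u)=0$ in $L^1$. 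Thus $u-\tilde u$ is a polynomial of degree less than $m$ which vanishes on $\Omega-D$, a set of positive measure, so $u=\tilde u$ on $\Omega$ and $\mathcal M=\{u\}$. The delicate point in the whole argument is the passage from $W^{m-1,1}$-approximation to convergence of the full $K$-energy along the smooth approximants: it is precisely the area-strict convergence supplied by Theorem \ref{Thm1.3}(b), combined with Reshetnyak's continuity theorem for linear-growth functionals on tensor-valued measures, that drives the equivalence between the Sobolev problem $J^m$ and its relaxation $K$ exploited in (c), (d) and (e).
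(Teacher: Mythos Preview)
Your proposal is correct and follows essentially the same route as the paper: the direct method with Lemma~\ref{Poinc} and Lemma~\ref{BVc} for (a), strict convexity for (b), and---crucially---the density result Theorem~\ref{Thm1.3}(b) combined with Reshetnyak's continuity theorem (stated in the paper as the unnumbered lemma in Section~\ref{Sec3}) to obtain (c) and (d), while (e) is the standard polynomial argument. The paper itself gives only a sketch referring to \cite{BF3} and \cite{FT} for the details you have spelled out, so your write-up is in fact more explicit, particularly in (e); the only place you are slightly terse is in (d), where you should note (as the paper does via Fatou) that a $W^{m-1,1}$-cluster point of a $J^m$-minimizing sequence automatically lies in $BV^m(\Omega)\cap L^2(\Omega-D)$ before invoking lower semicontinuity.
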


Let us pass to the variational problem being in duality to (\ref{1.12}) and (\ref{1.14}), respectively. To this purpose we introduce the Lagrangian by defining
\begin{align*}
l(w,\kappa):=\intop_{\Omega}\big[\kappa:\nabla^mw-F^*(\kappa)\big]\dx+\frac{\lambda}{2}\intop_{\Omega-D}|w-f|^2\dx,\\
w\in W^{m,1}(\Omega)\cap L^2(\Omega-D),\;\kappa\in L^\infty(\Omega,S^m(\R^n)),
\end{align*}
$F^*$ denoting the convex conjugate of $F$ and ''$:$'' the standard scalar product on $S^m(\R^n)$.  It holds
\begin{align}
J^m[w]=\underset{\kappa\in L^\infty(\Omega,S^m(\R^n))}{\sup} l(w,\kappa),\; w\in W^{m,1}(\Omega)\cap L^2(\Omega-D),
\end{align}
and we define the dual functional $R:L^\infty(\Omega,S^m(\R^n))\rightarrow[-\infty,\infty]$ through the formula
\begin{align*}
R[\kappa]:=\underset{w\in W^{m,1}(\Omega)\cap L^2(\Omega-D)}{\inf}l(w,\kappa).
\end{align*}

\begin{Satz}\label{Thm1.2}
Consider $\Omega$, $D$ and $F$  as in Theorem \ref{Thm1.1}. Then we have:
\begin{enumerate}[(a)]
\item The dual problem
 \begin{align*}
 R\rightarrow\max\text{ in }L^\infty(\Omega,S^m(\R^n))
\end{align*}
admits a unique solution $\sigma$.
\item $\sigma$ satisfies the duality relation $\sigma=DF\big((\nabla^mu)^a\big)$ almost everywhere on $\Omega$, where $u$ denotes any $K$-minimizer from the space $BV^m(\Omega)\cap L^2(\Omega-D)$.
\item The ''$\inf$-$\sup$ relation'' holds, i.e. we have
\begin{align*}
\underset{W^{m,1}(\Omega)\cap L^2(\Omega-D)}{\inf}J^m=\sup_{L^\infty(\Omega,S^m(\R^n))}R.
\end{align*}
\end{enumerate}
\end{Satz}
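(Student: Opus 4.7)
The plan is to follow the standard convex--duality scheme, the only genuine difficulty being the non-reflexivity of the primal space $W^{m,1}(\Omega)\cap L^2(\Omega-D)$. As a preliminary I would first make $R[\kappa]$ explicit. Minimizing $l(w,\kappa)$ in $w$ decouples into minimizing the linear form $w\mapsto\int_\Omega\kappa:\nabla^m w\dx$ and a pointwise quadratic minimization on $\Omega-D$; after $m$-fold formal integration by parts the first requires $(-1)^m\operatorname{div}^m\kappa$ to have an $L^2$-representative on $\Omega-D$, to vanish on $D$ in the distributional sense and to satisfy suitable boundary conditions. On this admissible set $R[\kappa]$ has a closed form involving $-\int_\Omega F^*(\kappa)\dx$ and a quadratic term in $\operatorname{div}^m\kappa$. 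Moreover $|DF|\le\nu_1$ forces $F^*(\kappa)=+\infty$ for $|\kappa|>\nu_1$, so every $\kappa$ with $R[\kappa]>-\infty$ automatically satisfies $|\kappa|\le\nu_1$ a.e.

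The heart of the argument is the no-gap identity (c), from which (a) and (b) will follow. The direction $\sup R\le\inf J^m$ is immediate from $R[\kappa]\le l(w,\kappa)\le J^m[w]$. For the reverse I would regularize the primal by
\[
J^m_\varepsilon[w]:=J^m[w]+\tfrac{\varepsilon}{2}\int_\Omega|\nabla^m w|^2\dx
\]
on the reflexive Hilbert space $W^{m,2}(\Omega)\cap L^2(\Omega-D)$. Strict convexity and coercivity yield a unique minimizer $u_\varepsilon$, and $\sigma_\varepsilon:=DF(\nabla^m u_\varepsilon)$ is uniformly bounded in $L^\infty$ by $\nu_1$; a subsequence therefore converges weakly-$*$ to some $\sigma\in L^\infty(\Omega,S^m(\R^n))$. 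Theorem \ref{Thm1.3}(a) applied to an arbitrary admissible $J^m$-competitor delivers $\inf J^m_\varepsilon\to\inf J^m$ as $\varepsilon\downarrow 0$; passing to the limit in the Euler--Lagrange identity for $u_\varepsilon$ together with weak-$*$ lower semicontinuity of $\kappa\mapsto\int_\Omega F^*(\kappa)\dx$ then yields $R[\sigma]\ge\inf J^m$. This proves (c) and simultaneously produces an $R$-maximizer, hence (a).

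For (b) I take any $K$-minimizer $u\in BV^m(\Omega)\cap L^2(\Omega-D)$ and, by Theorem \ref{Thm1.1}(d), pick a $J^m$-minimizing sequence $(w_k)\subset W^{m,1}(\Omega)\cap L^2(\Omega-D)$ with $w_k\to u$ in $W^{m-1,1}$. The sandwich $R[\sigma]\le l(w_k,\sigma)\le J^m[w_k]\to R[\sigma]$ forces
\[
\int_\Omega\bigl[F(\nabla^m w_k)+F^*(\sigma)-\sigma:\nabla^m w_k\bigr]\dx\longrightarrow 0.
\]
The integrand is pointwise nonnegative by Fenchel's inequality, hence converges to zero in $L^1(\Omega)$. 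A Reshetnyak-type lower semicontinuity argument identifies the limit with $F\bigl((\nabla^m u)^a\bigr)+F^*(\sigma)-\sigma:(\nabla^m u)^a$ a.e. on $\Omega$, and the equality case of Fenchel's inequality combined with the strict convexity of $F$ then forces $\sigma=DF\bigl((\nabla^m u)^a\bigr)$ a.e. Uniqueness in (a) is a direct corollary, since $(\nabla^m u)^a$ is common to all $K$-minimizers by Theorem \ref{Thm1.1}(b).

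The main obstacle I anticipate is precisely this last identification step: controlling the $L^1$-limit of $F(\nabla^m w_k)-\sigma:\nabla^m w_k$ and preventing mass from escaping into the singular part $\mu^s$ of $\nabla^m u$. This is where the refined approximation of Theorem \ref{Thm1.3}(b) becomes essential, as it produces a $J^m$-minimizing sequence not only converging to $u$ in $W^{m-1,1}$ but also satisfying $|\nabla^m w_k|(\Omega)\to|\nabla^m u|(\Omega)$, so that a Reshetnyak continuity argument forces the approximating measures to respect the Lebesgue decomposition of $\nabla^m u$ in the limit and legitimizes the pointwise identification above.
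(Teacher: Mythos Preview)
Your proposal follows essentially the same route as the paper: construct the dual maximizer $\sigma$ as a weak-$*$ limit of $\sigma_\varepsilon:=DF(\nabla^m u_\varepsilon)$, where $u_\varepsilon$ minimizes the quadratically regularized functional $J^m_\varepsilon$, and invoke the density result Theorem~\ref{Thm1.3} to bridge the gap between the reflexive space $W^{m,2}$ (where the regularization lives) and the primal space $W^{m,1}(\Omega)\cap L^2(\Omega-D)$. The paper, following \cite{BF3}, applies Theorem~\ref{Thm1.3}(a) at a slightly different point: rather than using it to show $\inf J^m_\varepsilon\to\inf J^m$, it passes to the limit $\varepsilon\downarrow 0$ in the Euler--Lagrange equation for $u_\varepsilon$ (with $W^{m,2}$ test functions) and then uses the density of $C^\infty(\overline{\Omega})$ in $W^{m,1}(\Omega)\cap L^2(\Omega-D)$ to extend the limiting identity $\int_\Omega\sigma:\nabla^m\varphi\,dx+\lambda\int_{\Omega-D}(u-f)\varphi\,dx=0$ to all $\varphi$ in the primal space; this identity is then what drives both the $R$-maximality of $\sigma$ and the inf--sup relation. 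The two uses of density are equivalent in effect.

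For part~(b) the paper does not spell out an independent argument but refers to \cite{BF3}, where the duality relation is read off directly from the regularization: one has concrete convergence of $\nabla^m u_\varepsilon$ and $\sigma_\varepsilon=DF(\nabla^m u_\varepsilon)$, and the identification $\sigma=DF\bigl((\nabla^m u)^a\bigr)$ for the particular limit $u=\lim u_\varepsilon$ follows; since $(\nabla^m u)^a$ is common to all $K$-minimizers (Theorem~\ref{Thm1.1}(b)), this suffices. Your alternative route through an abstract $J^m$-minimizing sequence $w_k$ and a Fenchel-defect argument is reasonable, and you correctly flag the delicate point: passing from $\int\bigl[F(\nabla^m w_k)+F^*(\sigma)-\sigma:\nabla^m w_k\bigr]\,dx\to 0$ to a pointwise statement about $(\nabla^m u)^a$ requires control of $\int\sigma:\nabla^m w_k$ for a merely $L^\infty$ (not continuous) $\sigma$, which is not covered by Reshetnyak continuity in its standard form. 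The paper's regularization route sidesteps this by never needing to test the singular measure $\nabla^m u$ against $\sigma$ directly.
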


We finish our survey by adding a particular regularity result.

\begin{Satz}
\label{Thm1.4}
Let $\Omega$ denote a bounded Lipschitz domain in $\R^2$ and fix a measurable subset $D$ of $\Omega$ with $\mathcal{L}^2(\Omega-D)>0$. Consider an observed image $f$ in the space $L^\infty(\Omega-D)$. Moreover, suppose that $F$ is of class $C^2$ satisfying (\ref{1.8})-(\ref{1.10}) as well as the condition of $\mu$-ellipticity
\begin{align}
 \label{1.15} \nu_4\big(1+|Z|\big)^{-\mu}|X|^2\leq D^2F(Z)\big(X,X\big)\leq\nu_5\frac{1}{1+|Z|}|X|^2
\end{align}
 for some exponent $\mu>1$ and with positive constants $\nu_4,\nu_5$ for all $X,Z\in S^m(\R^2)$. Then, if 
\begin{align*}
 \text{either }&:\mu<2 \text{ together with } D=\emptyset\\
 \text{or \hspace{0.64cm}}&:\mu<\frac{3}{2} \text{ in case of general } D,
\end{align*}
the following statements hold:
\begin{enumerate}[(a)]
 \item Problem (\ref{1.12}) admits a unique solution $u$ in the space $W^{m,1}(\Omega)$.
 \item The function $u$ belongs to any class $W^{m,p}_{\mathrm{loc}}(\Omega)$, $p<\infty$.
 \item There is an open subset $\Omega_0$ of $\Omega$ with full $\mathcal{L}^2$-measure such that the minimizer $u$ is of class $C^{m,\alpha}(\Omega_0)$ for any $\alpha\in (0,1)$. In fact it holds $\mathcal{H}$-$\dim(\Omega-\Omega_0)=0$, i.e. $\mathcal{H}^\eps(\Omega-\Omega_0)=0$ for any $\eps>0$.
 \item If $\sigma$ denotes the solution of the dual problem from Theorem \ref{Thm1.2} (a), then the first weak partial derivatives of $\sigma$ exist in the space $L^2_{\mathrm{loc}}(\Omega)$.
\end{enumerate}
\end{Satz}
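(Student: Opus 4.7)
The plan is to follow the regularization strategy successfully employed for the first order model in \cite{BF1}, \cite{BF2}, \cite{BF3} and \cite{FT}, and to push it through to the higher order setting by exploiting Sobolev embedding in dimension two. Concretely, I would approximate $F$ by $F_\delta(Z):=F(Z)+\delta(1+|Z|^2)^{q/2}$ for some fixed exponent $q>1$ (say $q=2$) and study the regularized problems
\[
J^m_\delta[w]:=\into F_\delta(\nabla^m w)\dx+\frac{\lambda}{2}\intop_{\Omega-D}|w-f|^2\dx\to\min
\]
in the reflexive space $W^{m,q}(\Omega)\cap L^2(\Omega-D)$, which by direct methods admits a unique minimizer $u_\delta$. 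From $J^m_\delta[u_\delta]\le J^m_\delta[0]$ together with the growth condition (\ref{1.11}) I obtain uniform bounds on $\||\nabla^m u_\delta|\|_{L^1(\Omega)}+\|u_\delta\|_{L^2(\Omega-D)}$, and by passing to the limit in the relaxed functional $K$ (using lower semicontinuity in the sense of convex functions of a measure, as in Theorem \ref{Thm1.1}), one finds that any accumulation point $u$ of $(u_\delta)$ in $W^{m-1,1}(\Omega)$ minimizes $K$.

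The decisive step is higher integrability of $\nabla^m u_\delta$. Starting from the Euler–Lagrange equation for $u_\delta$, I would differentiate once in a direction $\partial_\gamma$ and test with $\partial_\gamma u_\delta\,\eta^{2}$ for a cutoff $\eta\in C^\infty_c$. The fidelity term, after differentiating, produces $\lambda\int(\partial_\gamma u_\delta)^2\eta^2$ on the region $\Omega-D$, which is manageable because $f$ need not be touched. The resulting Caccioppoli inequality combined with (\ref{1.15}) yields
\[
\into \eta^{2}(1+|\nabla^m u_\delta|)^{-\mu}|\nabla^{m+1}u_\delta|^{2}\dx
\;\le\; C\into|\n\eta|^{2}(1+|\nabla^m u_\delta|)\dx + \text{l.o.t.},
\]
uniformly in $\delta$. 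Writing this as an $L^2$ bound for $\nabla\bigl[(1+|\nabla^m u_\delta|)^{(2-\mu)/2}\bigr]$ and invoking the Sobolev embedding $W^{1,2}_{\mathrm{loc}}\hookrightarrow L^p_{\mathrm{loc}}$ (valid for every $p<\infty$ in $\R^2$), a Moser-type iteration furnishes $\nabla^m u_\delta\in L^p_{\mathrm{loc}}$ uniformly, provided $\mu<2$ in the pure denoising case and $\mu<3/2$ in the general inpainting case (the stricter bound is forced by the extra absorbing step needed to handle the contribution of the fidelity term where it is not zero). Passing to the limit $\delta\to 0$ yields (b), and uniqueness in (a) follows because a $W^{m,1}$ candidate is already in the class appearing in Theorem \ref{Thm1.1}(e).

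For the partial regularity (c), the uniform $W^{m,p}_{\mathrm{loc}}$ control gives $\nabla^{m-1}u\in C^{0,\alpha}_{\mathrm{loc}}$ for every $\alpha\in(0,1)$ by Sobolev in $\R^2$, and at any Lebesgue point $x_0$ of $\nabla^m u$ a blow-up comparison with the constant-coefficient linear system obtained by freezing $D^2F$ at $\nabla^m u(x_0)$ produces the standard excess decay estimate; the regular set $\Omega_0$ consists of such points and has full Lebesgue measure. The fact $\mathcal{H}\text{-}\dim(\Omega-\Omega_0)=0$ can be extracted from the sharper estimate
\[
\mathcal{H}^{\varepsilon}\Bigl(\bigl\{x:\limsup_{r\downarrow 0} r^{-\varepsilon}\hspace{-2mm}\intop_{B_r(x)}\hspace{-2mm}|\nabla^m u|^p\dx=\infty\bigr\}\Bigr)=0
\]
which holds for every $p<\infty$ and every $\varepsilon>0$ since $\nabla^m u$ is locally $L^p$ for all $p$. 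Finally, (d) follows from an independent $W^{1,2}_{\mathrm{loc}}$-bound on $\sigma_\delta:=DF_\delta(\nabla^m u_\delta)$: differentiating the Euler–Lagrange equation and testing with $\partial_\gamma\sigma_\delta\,\eta^2$ isolates the quantity $|\n\sigma_\delta|^2$ on the left-hand side (using that $DF_\delta$ has a $C^1$ inverse bounded by $(1+|Z|)$ thanks to the upper bound in (\ref{1.15})), and a weak limit procedure then delivers $\sigma\in W^{1,2}_{\mathrm{loc}}(\Omega)$.

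The hardest part is unquestionably the higher integrability step: carefully tracking the dependence of the Caccioppoli constants on $\mu$ in the presence of the $L^2$-fidelity on $\Omega-D$ (which contributes boundary-type remainders when $\eta$ is not supported in $D$) is what forces the stricter threshold $\mu<3/2$ in the general case, and verifying that the Moser iteration indeed closes up to $L^p_{\mathrm{loc}}$ for every finite $p$ requires the exponent balance to stay on the right side of the Sobolev borderline, which is only possible thanks to $n=2$.
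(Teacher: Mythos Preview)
Your overall strategy---quadratic regularization $F_\delta=F+\delta|\cdot|^2$, a Caccioppoli estimate for the differentiated Euler equation, a uniform $W^{1,2}_{\mathrm{loc}}$ bound for $\varphi_\delta:=(1+|\nabla^m u_\delta|)^{(2-\mu)/2}$, then Sobolev in $\R^2$---is exactly the route the paper takes, and for parts (a), (b) and the blow-up argument in (c) your outline is sound. Two steps, however, are not right.

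\textbf{The fidelity term when $D\neq\emptyset$.} You claim that after differentiation the data term ``produces $\lambda\int(\partial_\gamma u_\delta)^2\eta^2$ on $\Omega-D$'' and that ``$f$ need not be touched''. This is only what happens when $D=\emptyset$: there one may integrate by parts over all of $\Omega$, the $u_\delta$-part becomes $-\int_\Omega\eta^{2k}|\nabla u_\delta|^2$ with the favourable sign, and the remaining $f$-contribution is controlled by $\|f\|_\infty\int(|\nabla u_\delta|+|\nabla^m u_\delta|)$. For merely measurable $D$ you \emph{cannot} integrate by parts on $\Omega-D$, so the differentiated data term stays in the form $\lambda\int_{\Omega-D}(u_\delta-f)\,\partial_\gamma(\eta^{2k}\partial_\gamma u_\delta)\dx$, which after expansion contains the critical piece $\int|u_\delta|\,\eta^{2k}|\nabla^m u_\delta|$. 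The paper's mechanism is: apply Young with exponents $\mu$ and $\mu/(\mu-1)$ to produce $\int\eta^{2k}|\nabla^m u_\delta|^{\mu}$, estimate this via the Sobolev inequality applied to $(1+|\nabla^m u_\delta|)^{\mu/2}$, and absorb the resulting $\int\eta^{2k}|\nabla\varphi_\delta|^2$ back into the left-hand side. The absorption works because the prefactor is $\int(1+|\nabla^m u_\delta|)^{2\mu-2}$, which is small on small balls precisely when $2\mu-2<1$, i.e.\ $\mu<3/2$. Your sketch does not identify either the obstruction or this mechanism. (Incidentally, no Moser iteration is needed: once $\varphi_\delta\in W^{1,2}_{\mathrm{loc}}$ uniformly, two-dimensional Sobolev gives $\nabla^m u_\delta\in L^p_{\mathrm{loc}}$ for every $p<\infty$ in a single step.)

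\textbf{The Hausdorff dimension of the singular set.} The statement $\mathcal{H}^\varepsilon\bigl(\{x:\limsup_{r\downarrow0}r^{-\varepsilon}\int_{B_r(x)}|\nabla^m u|^p\dx=\infty\}\bigr)=0$ is correct, but $\Omega-\Omega_0$ is characterised by the failure of excess decay and of boundedness of the averages $(\nabla^m u)_{x,r}$, and neither condition is implied by your density condition from $L^p$-integrability alone. The paper instead uses the stronger fact $\nabla^m u\in W^{1,s}_{\mathrm{loc}}$ for every $s<2$ (which does follow from your Caccioppoli inequality via H\"older), together with the classical results that the non-Lebesgue points of a $W^{1,s}$ function form an $\mathcal{H}^{n-s}$-null set and that the excess is dominated by $r^{s-n}\int_{B_r}|\nabla^{m+1}u|^s$ through Poincar\'e. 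For (d), ``testing with $\partial_\gamma\sigma_\delta\,\eta^2$'' is not well-defined since the Euler equation takes scalar test functions; the paper simply applies Cauchy--Schwarz with respect to the bilinear form $D^2F_\delta(\nabla^m u_\delta)$ to $\partial_\gamma\sigma_\delta=D^2F_\delta(\nabla^m u_\delta)\partial_\gamma\nabla^m u_\delta$ and combines this with the already established bound on $\int D^2F_\delta(\nabla^m u_\delta)(\partial_\gamma\nabla^m u_\delta,\partial_\gamma\nabla^m u_\delta)$.
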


\begin{Rem}
 Condition (\ref{1.15}) holds for our example (\ref{1.3}) with $\Phi_\mu$ from (\ref{1.4}). It is easy to check that any density $F$ with (\ref{1.15}) for some $\mu>1$ is of linear growth in the sense of (\ref{1.11}).
\end{Rem}
\begin{Rem}\label{Rem1.2}
 Motivated by the results outlined in \cite{BF2}, \cite{FT} and \cite{Ti} we conjecture that actually $\mu<2$ is sufficient for obtaining the statements of Theorem \ref{Thm1.4} even in the case $D\neq \emptyset$. Moreover we think that $(c)$ can be improved to $u\in C^{m,\alpha}(\Omega)$, $0<\alpha<1$.
\end{Rem}
\begin{Rem}
 From the point of view of numerical applications it is desirable to replace the energies $J^m$ (from (\ref{1.12})) and $K$ (from (\ref{1.13})) by functionals of lower order involving appropriate coupling terms. This might also concern the aspect of regularity addressed in Remark \ref{Rem1.2}. The reader is referred to \cite{BFW}.
\end{Rem}

The rest of this article is organized as follows: in the next section we introduce the space $BV^m(\Omega)$ of functions of $m$-th order bounded variation along with two auxiliary results and prove our density result Theorem \ref{Thm1.3}. This is followed by a short section in which we sketch the proofs of both Theorem \ref{Thm1.1} and \ref{Thm1.2}. Since they only differ by the use of  Theorem \ref{Thm1.3} from the first order case studied in \cite{BF3} and \cite{FT}, we decided not to go into the details but rather to point out where  the density result finds application. The last part is entirely devoted to the proof of our regularity result Theorem \ref{Thm1.4}.

\end{section}

\begin{section}{The space $BV^m$, proof of Theorem \ref{Thm1.3}}\label{SecBV}
Several authors have conceptualized the notion of higher order bounded variation in varying ways, e.g. via the distributional Jacobian (cf. \cite{JS}). Despite that, the most natural generalization of this term appears to be saying  an $L^1$-function  is of $m$-th order bounded variation if it is $m-1$ times weakly  differentiable in $L^1$ and  its $m$-th order distributional gradient, i.e. the symmetric Tensor ($\partial_{i_1,...,i_m}u)_{i_1,...,i_m=1}^n$ is represented by a tensor valued finite Radon measure. As previously mentioned, we designate the space of all functions of $m$-th order bounded variation by
\begin{equation*}
 BV^m(\Omega):=\bigg\{u\in W^{m-1,1}(\Omega)\;:\;\nabla^{m-1}u\in BV\big(\Omega,S^m(\R^n)\big)\bigg\}
\end{equation*}
and note, that together with the norm
\[
 \|u\|_{BV^m(\Omega)}:=\|u\|_{m-1,1;\Omega}+|\nabla^mu|(\Omega)
\]
it becomes a Banach space. Further aspects of these (and even more general spaces) concerning also the approximation by smooth functions  have been outlined in \cite{DT} and \cite{De}.
\begin{Rem}
In the special case  $m=2$ the term  ''functions of bounded Hessian'' has  been established in \cite{De} by Demengel, and many authors prefer to write $\mbox{HB}(\Omega)$ (for \textit{''hessien borné''}) instead of $BV^2(\Omega)$, since. 
\end{Rem}
By the nature of its definition, the space $BV^m(\Omega)$ inherits the following compactness property which can be proved exactly as the corresponding first order result  (see \cite{AFP}, Theorem 3.23, p. 132):
\begin{Lem}[compactness in $BV^m$]\label{BVc}
Let $\Omega\subset\R^n$ be a bounded Lipschitz domain and $(u_k)_{k=1}^\infty\subset BV^m(\Omega)$ a sequence with
\[
 \|u_k\|_{BV^m(\Omega)}\leq M
\]
for some constant $M>0$. Then there is a subsequence $(u_{k_l})_{l=1}^\infty$ and  a function $u\in BV^m(\Omega)$ such that
\[
 \|u-u_{k_l}\|_{m-1,1;\Omega}\rightarrow 0\;\text{ for }l\rightarrow\infty\;\text{ and }\;\|u\|_{BV^m(\Omega)}\leq M.
\]
\end{Lem}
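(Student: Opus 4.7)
The plan is to reduce the lemma to the classical first-order $BV$ compactness applied to the top-order tensor $\nabla^{m-1}u_k$, combined with iterated Rellich--Kondrachov at lower levels, and then to glue the limits via uniqueness of distributional derivatives.

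First I would note that the hypothesis $\|u_k\|_{BV^m(\Omega)}\leq M$ forces the tensor-valued sequence $(\nabla^{m-1}u_k)$ to be bounded in $BV(\Omega,S^{m-1}(\R^n))$: the $L^1$-norm of $\nabla^{m-1}u_k$ is bounded by $\|u_k\|_{m-1,1;\Omega}\leq M$, and the total variation of its distributional gradient is exactly $|\nabla^mu_k|(\Omega)\leq M$. The classical $BV$-compactness theorem cited in the statement (\cite{AFP}, Theorem 3.23), applied componentwise to the finitely many entries of the symmetric tensor (extracting nested subsequences), yields a subsequence (not relabeled) and a limit $v\in BV(\Omega,S^{m-1}(\R^n))$ with $\nabla^{m-1}u_k\to v$ in $L^1$ and $|Dv|(\Omega)\leq\liminf_k|\nabla^mu_k|(\Omega)$.

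Next I would exploit that $(u_k)$ is uniformly bounded in $W^{m-1,1}(\Omega)$, so that each $\nabla^j u_k$ with $j\leq m-2$ is bounded in $W^{1,1}(\Omega)$. Iterating the Rellich--Kondrachov compact embedding $W^{1,1}(\Omega)\hookrightarrow L^1(\Omega)$ (valid since $\Omega$ is Lipschitz) and passing to a diagonal subsequence produces $L^1$-limits $w_j$ for each $\nabla^j u_{k_l}$, $j=0,\dots,m-2$. Uniqueness of distributional derivatives under $L^1$-convergence forces $w_j=\nabla^j w_0$ for all such $j$ and $v=\nabla^{m-1}w_0$; setting $u:=w_0$ gives $u\in W^{m-1,1}(\Omega)$ with $\nabla^{m-1}u=v\in BV(\Omega,S^{m-1}(\R^n))$, i.e.\ $u\in BV^m(\Omega)$, and $\|u-u_{k_l}\|_{m-1,1;\Omega}\to 0$ by construction.

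The norm estimate $\|u\|_{BV^m(\Omega)}\leq M$ is then immediate: the $L^1$-convergence of $\nabla^j u_{k_l}$ for $j=0,\dots,m-1$ gives $\|u\|_{m-1,1;\Omega}=\lim_l\|u_{k_l}\|_{m-1,1;\Omega}$, while the lower semicontinuity of total variation under $L^1$-convergence of the $(m-1)$st derivative gives $|\nabla^m u|(\Omega)\leq\liminf_l|\nabla^m u_{k_l}|(\Omega)$, and summing yields the bound. The argument is essentially mechanical, matching verbatim the first-order template invoked in the statement; the only mild bookkeeping hurdle---and the closest thing to a real obstacle---is the vector-valued nature of $\nabla^{m-1}u_k$, which is handled transparently by componentwise subsequence extraction.
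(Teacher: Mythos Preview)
Your argument is correct and is precisely the elaboration the paper has in mind: the paper gives no detailed proof beyond the remark that the result ``can be proved exactly as the corresponding first order result (see \cite{AFP}, Theorem 3.23)'', and your reduction to first-order $BV$ compactness for $\nabla^{m-1}u_k$ together with Rellich--Kondrachov at the lower levels is exactly that template. One cosmetic remark: since there are only finitely many levels $j=0,\dots,m-2$, a finite iteration of nested subsequences suffices and no genuine diagonal argument is needed.
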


The following observation on equivalent norms in higher order spaces will be useful throughout our survey:

\begin{Lem}\label{Poinc} Let $\Omega\subset\R^n$ be a bounded Lipschitz domain, $m\in\N$, $1\leq p<\infty$ and let $D\subset\Omega$ be a measurable subset with $0\leq\mathcal{L}^n(D)<\mathcal{L}^n(\Omega)$.
\begin{enumerate}[(a)]
 \item There is a constant $C>0$, depending only on $\Omega$, $D$, $m,n$ and $p$ such that for all $u\in W^{m,p}(\Omega)$
\[
 \|u\|_{m,p;\Omega}\leq C\big(\|\nabla^{m}u\|_{p;\Omega}+\|u\|_{1,\Omega-D}\big).
\]
\item There is a constant $C>0$, depending only on $\Omega$, $D$, $m$ and $n$ such that for all $u\in BV^m(\Omega)$
\[
 \|u\|_{BV^m(\Omega)}\leq C\big(|\nabla^{m}u|(\Omega)+\|u\|_{1,\Omega-D}\big).
\]
\end{enumerate}
\end{Lem}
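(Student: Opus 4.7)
My plan is to prove both parts by the same compactness/contradiction scheme. For each part, suppose the claimed inequality fails. Then there is a sequence $(u_k)_{k=1}^\infty$ in $W^{m,p}(\Omega)$ (resp. $BV^m(\Omega)$) normalized to $\|u_k\|_{m,p;\Omega}=1$ (resp. $\|u_k\|_{BV^m(\Omega)}=1$) while simultaneously $\|\nabla^m u_k\|_{p;\Omega}+\|u_k\|_{1;\Omega-D}\to 0$ (resp. $|\nabla^m u_k|(\Omega)+\|u_k\|_{1;\Omega-D}\to 0$). The strategy is to extract a subsequence with limit $u$ that must be a polynomial of degree at most $m-1$ and then to exploit the $L^1$-smallness on $\Omega-D$ to force $u\equiv 0$, yielding a contradiction with the normalization.

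For part (a): since $\|u_k\|_{m,p;\Omega}\le 1$, the Rellich--Kondrachov theorem on the bounded Lipschitz domain $\Omega$ delivers a subsequence converging in $W^{m-1,p}(\Omega)$ to some $u$. Combined with $\nabla^m u_k\to 0$ in $L^p$, the convergence is actually in $W^{m,p}(\Omega)$, and the limit satisfies $\nabla^m u=0$ in $L^p$. Hence $u$ is (a.e. equal to) a polynomial of degree at most $m-1$. The convergence in $W^{m-1,p}$ implies convergence in $L^1(\Omega-D)$, so $\|u\|_{1;\Omega-D}=0$, i.e. $u=0$ a.e.~on $\Omega-D$. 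Since $\mathcal{L}^n(\Omega-D)>0$, this forces $u\equiv 0$. But then $\|u_k\|_{m,p;\Omega}\to \|u\|_{m,p;\Omega}=0$, contradicting $\|u_k\|_{m,p;\Omega}=1$.

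For part (b) one argues analogously, only replacing Rellich--Kondrachov by the $BV^m$-compactness assertion Lemma~\ref{BVc}: the sequence $(u_k)$ admits a subsequence converging in $W^{m-1,1}$ to some $u\in BV^m(\Omega)$. Lower semicontinuity of the total variation under $W^{m-1,1}$-convergence gives $|\nabla^m u|(\Omega)\le\liminf_k|\nabla^m u_k|(\Omega)=0$, so $\nabla^m u$ vanishes as a Radon measure; equivalently, $\nabla^{m-1}u$ is weakly constant and $u$ is a polynomial of degree at most $m-1$. As before, $u=0$ a.e.~on $\Omega-D$ implies $u\equiv 0$, so $\|u_k\|_{m-1,1;\Omega}\to 0$ and in addition $|\nabla^m u_k|(\Omega)\to 0$ by assumption, whence $\|u_k\|_{BV^m(\Omega)}\to 0$, contradicting the normalization.

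The one algebraic ingredient that the argument really hinges on is the rigidity fact that a polynomial on $\R^n$ which vanishes on a set of positive Lebesgue measure is identically zero; this is where the hypothesis $\mathcal{L}^n(D)<\mathcal{L}^n(\Omega)$ enters in an essential way. I do not expect any deep obstacle: the main points of care are the correct use of Rellich--Kondrachov (part (a)) versus the $BV^m$-compactness Lemma~\ref{BVc} together with lower semicontinuity of the measure-valued top derivative (part (b)), and the verification that in both cases the limit is genuinely a polynomial rather than merely a distributional solution of $\nabla^m u=0$, which follows from standard properties of Sobolev/$BV$ functions.
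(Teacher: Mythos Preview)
Your argument is correct. The compactness/contradiction scheme you run is the standard way to prove such generalized Poincar\'e inequalities, and all the ingredients (Rellich--Kondrachov in part (a), Lemma~\ref{BVc} plus lower semicontinuity of the total variation in part (b), and the fact that a polynomial vanishing on a set of positive measure must be identically zero) are invoked correctly.

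The paper takes a slightly different route. For (a) it does not carry out the contradiction argument explicitly but simply invokes Theorem~1.1.15 in Maz'ja \cite{Ma} with the functional $\mathcal{F}(u)=\int_{\Omega-D}|u|\,dx$, the point being precisely your ``algebraic ingredient'' that no nontrivial polynomial of degree $\leq m-1$ vanishes on a set of positive measure. (Of course, the proof of that theorem in \cite{Ma} is essentially the compactness argument you wrote out.) For (b) the paper does \emph{not} repeat the contradiction scheme; instead it derives the $BV^m$ inequality from part (a) by smooth approximation: take $\varphi_k\in C^\infty$ approximating $u$ strictly in $BV^m$, apply (a) with $p=1$ to each $\varphi_k$, and pass to the limit using $\|\varphi_k\|_{m-1,1;\Omega}\to\|u\|_{m-1,1;\Omega}$, $\|\nabla^m\varphi_k\|_{1;\Omega}\to|\nabla^m u|(\Omega)$ and $\|\varphi_k\|_{1;\Omega-D}\to\|u\|_{1;\Omega-D}$. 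Your approach treats both parts uniformly and is entirely self-contained (in particular it does not rely on any strict-density statement for $BV^m$), whereas the paper's approach is shorter once the reference is accepted and makes transparent that the $BV^m$ estimate is the limiting case of the Sobolev one.
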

\begin{proof}
 \noindent (a) is a direct consequence of Theorem 1.1.15 in \cite{Ma} with the choice $\mathcal{F}(u):=\intop_{\Omega-D}|u|\dx$, since no non-trivial polynomial of degree $m-1$ vanishes on a set of positive Lebesgue measure.\\
 \noindent (b) follows from part (a) by approximation with smooth functions.
\end{proof}
For all $u,v\in BV^m(\Omega)$ we define a distance $d(.,.)$ by
\begin{align*}
d(u,v):=\|u-v\|_{m-1,1;\Omega}+\left|\int_{\Omega}|\nabla^m u|-\int_{\Omega}|\nabla^m v|\right|+\left|\int_{\Omega}f(\nabla^m u)-\int_{\Omega}f(\nabla^m v)\right|
\end{align*}
with  $f(X):=\sqrt{1+|X|^2}-1$ for $X\in S^m(\R^n)$.

We now give the \textit{\textbf{proof of Theorem \ref{Thm1.3}}} which states that smooth functions are dense in $BV^m(\Omega)\cap L^q(\Omega-D)$ with respect to this distance and the $L^q$-norm.

\noindent\textbf{\textit{ad (a)}}. We start with the following special case: Assume that $\Omega$ is a cuboid in $\R^n$,
\begin{align*}
 \Omega=(a_1,b_1)\times...\times(a_n,b_n)
\end{align*}
and $\Omega-D$ is given by
\begin{align*}
 \Omega-D=\big\{(x_1,...,x_n)\in\Omega\,\big|\,x_n<\phi(x_1,...,x_{n-1})\big\},
\end{align*}
where $\phi:(a_1,b_1)\times...\times (a_{n-1},b_{n-1})\rightarrow (a_n,b_n)$ is a Lipschitz continuous function with Lipschitz constant $L:=\text{Lip}(\phi)$. Consider now the sets 
\begin{align*}
 &\Omega_{-1}:=\emptyset,\\
 &\Omega_i:=\left\{x\in\Omega\,\big|\,\text{dist}(x,\partial{\Omega})>\frac{1}{i+1}\right\},\;i\in\N_0
\end{align*}
and consider the covering of $\Omega$ through the open sets $A_j:=\Omega_{j+1}-\overline{\Omega_{j-1}}$ for $j\in\N_{0}$. Let $(\eta_j)_{j=0}^\infty\subset C^\infty_0(\Omega)$ denote a partition of unity with respect to that covering, i.e. $\text{spt}\,\eta_j\Subset A_j$ and $\sum\limits_{j=0}^\infty \eta_j\equiv 1$. Let further $C$ denote the cone
\begin{align*}
 C:=\big\{(x_1,...,x_n)\in\R^n\,\big|\,x_n<-L|(x_1,...,x_{n-1},0)|\big\}
\end{align*}
and $\rho_\eps$ a symmetric mollifier supported in the ball $B_{\eps}(0)$. Note that for any $x\in \partial(\Omega-D)$ we have $(C+x)\cap\Omega\subset\Omega-D$.
For a given $\delta>0$, we will construct a function $\varphi_\delta\in C^\infty(\Omega)$ such that
\begin{align}\label{d0}
  \|u-\varphi_\delta\|_{m,p;\Omega}+\|u-\varphi_\delta\|_{q;\Omega-D}<\delta.
\end{align}
To this purpose, we consider $u_j:=\eta_ju$ and the shifted functions
\begin{align*}
u_j^{h_j}:=u_j(x_1,...,x_n-h_j) 
\end{align*}
for $h_j>0$ s.t. $\text{spt}(\eta_j)+h_j\Subset\Omega$. Since translations act continuously on $L^p(\R^n)$ (and hence, so they do on Sobolev spaces), we can choose a decreasing sequence of positive numbers $h_j$ such that $\text{spt}\,u_j+h_j\Subset A_j$ and
\begin{align}\label{d1}
  \|u_j-u_j^{h_j}\|_{m,p;\Omega}+\|u_j-u_j^{h_j}\|_{q;\Omega-D}<\delta 2^{-(j+2)}.
\end{align}
Further we can select a decreasing sequence of positive numbers $\eps_j$ which satisfy
\begin{align}
 &B_{\eps_j}(0)-(0,...,0,h_j)\Subset C \label{d2},\\
 &\big(\text{spt}(\eta_j)+h_j\big)^{\eps_j}\Subset A_{j+1}-A_{j-1} \label{lf}
\end{align}
and in addition
\begin{align}\label{d3}
 \|\rho_{\eps_j}\ast u_j^{h_j}-u_j^{h_j}\|_{m,p;\Omega}+\|\rho_{\eps_j}\ast u_j^{h_j}-u_j^{h_j}\|_{q;\Omega-D}<\delta 2^{-(j+2)}.
\end{align}
Note that due to (\ref{d2}) it actually holds $\rho_{\eps_j}\ast u_j^{h_j}\in L^q(\Omega-D)$ since $u_j^{h_j}$ is $q$-integrable on $\{x\in\Omega\,|\,x_n<\phi(x_1,...,x_{n-1})+h_j\}$.
By (\ref{d1}) and (\ref{d3}) we have
\begin{align}\label{d_4}
 \|u_j-\rho_{\eps_j}\ast u_j^{h_j}\|_{m,p;\Omega}+\|u_j-\rho_{\eps_j}\ast u_j^{h_j}\|_{q;\Omega-D}<\delta 2^{-(j+1)}.
\end{align}
Consequently, $\varphi_\delta:=\sum \limits_{j=0}^\infty \rho_{\eps_j}\ast u_j^{h_j}$ is a smooth function  that satisfies (\ref{d0}). Furthermore, by our construction we find that $u-\varphi_\delta\in \mathring{W}^{m,p}(\Omega).$

Now we consider the general case. Let $\delta>0$ be given. We can extend $u$ outside of $\Omega$ to a function in $W^{m,p}(\R^n)$ and therefore, w.l.o.g. assume that $\Omega-D$ is a compact subset of $\Omega$. We cover $\partial(\Omega-D)$ by a finite number of cuboids $Q_1,...,Q_N$ such that $(\Omega-D)\cap Q_i$ lies beneath the graph of a Lipschitz function, i.e. on each of the cuboids we are in the situation of our special case. Starting with $Q_1$, we can thus find a smooth function $\varphi_1\in C^\infty(Q_1)$ such that
\begin{align*}
 \|u-\varphi_1\|_{m,p;Q_1}+\|u-\varphi_1\|_{q;(\Omega-D)\cap Q_1}<\frac{\delta}{2N}
\end{align*}
and since $u-\varphi_1\in \mathring{W}^{m,p}(Q_1)$, the function $u_1$ defined through
\begin{align*}
 u_1(x):=
 \begin{cases}
  u(x),\;&x\in \Omega-Q_1,\\
  \varphi_1(x),\;&x\in Q_1
 \end{cases}
\end{align*}
is in $W^{m,p}(\Omega)\cap L^q(\Omega-D)$ and such that
\begin{align*}
 \|u-u_1\|_{m,p;\Omega}+\|u-u_1\|_{q;\Omega-D}<\frac{\delta}{2N}.
\end{align*}
Continuing this process on $Q_2$ with $u$ replaced by $u_1$ and so on, we finally end up with a function $u_N$ for which it holds
\begin{align*}
 \|u-u_N\|_{m,p;\Omega}+\|u-u_N\|_{q;\Omega-D}<\frac{\delta}{2}
\end{align*}
and which is smooth in an open neighbourhood $U$ of $\partial (\Omega-D)$. Choosing an open set $U_c\Subset U$ with $\partial(\Omega-D)\subset U_{c}$ and such that $\partial U_{c}$ is sufficiently regular, we get our desired approximating function by patching a suitable $C^{\infty}(\overline{\Omega}-\overline{U_{c}})$-approximation of $u|_{\overline{\Omega}-\overline{U_{c}}}$ with $u_N|_{U_c}$.

\noindent\textbf{\textit{ad (b)}}. We keep the notation from part (a). Again it will suffice to prove the claim in the special case of $\Omega$ being an $n$-dimensional cuboid and $\partial (\Omega-D)$ being the graph of a Lipschitz continuous function. For a given $\delta>0$ we choose a sequence $(h_j)_{j=0}^\infty$ of positive numbers such that $\text{spt}(\eta_j)+h_j\Subset A_j$ and the following conditions are satisfied:
\begin{align}
 &\|(\eta_ju)^{h_j}-\eta_j u\|_{m-1,1;\Omega}+\|(\eta_ju)^{h_j}-\eta_j u\|_{q;\Omega-D}<\delta2^{-(j+2)},\label{d1.6}\\
 &\big\|\big(\nabla^m(\eta_ju)-\eta_j\nabla^m u\big)^{h_j}-\big(\nabla^m(\eta_ju)-\eta_j\nabla^m u\big)\big\|_{1;\Omega}<\delta2^{-(j+2)}.\label{d1.7}
\end{align}
Note that $\nabla^m(\eta_ju)-\eta_j\nabla^m u\in L^1(\Omega)$ so that we can require $h_j$ to satisfy (\ref{d1.7}). Furthermore, since $\sum\limits_{j=0}^\infty\eta_j\equiv 1$ on $\Omega$ it holds $\sum\limits_{j=0}^\infty\big(\nabla^m(\eta_ju)-\eta_j\nabla^m u\big)\equiv 0$. Consider now 
\begin{align*}
 \udel:=\sum_{j=0}^\infty(\eta_ju)^{h_j}.
\end{align*}
We claim that $\udel$ approximates $u$ in $BV^m(\Omega)\cap L^q(\Omega-D)$ with respect to the metric $d(.,.)$. First, from (\ref{d1.6}) it follows 
\begin{align*}
 \|u-\udel\|_{m-1,1;\Omega}+\|u-\udel\|_{q;\Omega-D}<\delta/2.
\end{align*}
Let us compute the total variation of $\nabla^m\udel$. For a measure $\mu$ let $\mu^{h_j}$ denote the image measure under translation by $h_j$ in the $n$-th coordinate direction. 

By Proposition 3.18 in \cite{AFP} it holds $\nabla^m(\eta_ju)^{h_j}=\big(\nabla^m(\eta_ju)\big)^{h_j}$ and thus (recall $\sum\limits_{j=0}^\infty\eta_j\equiv 1$)
\begin{align*}
 &\nabla^m\udel=\sum_{j=0}^\infty\nabla^m(\eta_ju)^{h_j}=\sum_{j=0}^\infty\big(\nabla^m(\eta_ju)\big)^{h_j}\\
 &=\sum_{j=0}^\infty\big[(\eta_j\nabla^m u)^{h_j}+(\nabla^m(\eta_ju)-\eta_j\nabla^m u )^{h_j}(\mathcal{L}^n\mres\Omega)\big]\\
 &=\sum_{j=0}^\infty\big[(\eta_j\nabla^m u)^{h_j}+\big\{(\nabla^m(\eta_ju)-\eta_j\nabla^m u )^{h_j}-(\nabla^m(\eta_ju)-\eta_j\nabla^m u )\big\}(\mathcal{L}^n\mres\Omega)\big]
\end{align*}
so that
\begin{align*}
&\int_{\Omega}|\nabla^m\udel|\leq\sum_{j=0}^\infty\int_{\Omega}\big|(\eta_j\nabla^m u)^{h_j}\big|\\
&+\sum_{j=0}^\infty\int_{\Omega}\big|(\nabla^m(\eta_ju)-\eta_j\nabla^m u )^{h_j}-(\nabla^m(\eta_ju)-\eta_j\nabla^m u)\big|\dx\\
&\leq \sum_{j=0}^\infty\int_{\Omega}\big|(\eta_j\nabla^m u)^{h_j}\big|+\frac{\delta}{2}=\sum_{j=0}^\infty\int_{\Omega}\eta_j|\nabla^m u|+\frac{\delta}{2}=\int_{\Omega}|\nabla^mu| +\frac{\delta}{2}.
\end{align*}
Thus, for a sequence $\delta\downarrow 0$ we  find a corresponding sequence of functions $u_\delta$ for which $\limsup\limits_{\delta\rightarrow 0}|\nabla^m \udel|(\Omega)\leq |\nabla^mu|(\Omega)$ and  it follows from the lower semicontinuity of the total variation and $\udel\rightarrow u$ in $W^{m-1,1}(\Omega)$ that 
\begin{align*}
|\nabla^m \udel|(\Omega)\rightarrow |\nabla^mu|(\Omega).
\end{align*}
Moreover, we claim that it even holds 
\begin{align*}
(\nabla^m\udel)^a\rightarrow (\nabla^mu)^a\text{ in }L^1(\Omega,S^m(\R^n))
\end{align*} 
for $\delta\downarrow 0$ where, as before, for a tensor valued measure $\mu$ on $\Omega$ we denote by $\mu=\mu^a(\mathcal{L}^n\mres\Omega)+\mu^s$ its Lebesgue decomposition. To justify this, we first observe that on $\Omega_i$ we have 
\begin{align*}
 \nabla^m\udel|_{\Omega_i}=\Bigg(\sum_{j=0}^{i}\big(\nabla^m(\eta_ju)\big)^{h_j}\Bigg)\Bigg|_{\Omega_i}
\end{align*}
and since for two measures $\mu$ and $\nu$ it holds $(\mu+\nu)^a=\mu^a+\nu^a$ as well as $(\mu^{h_j})^a=(\mu^a)^{h_j}$, it follows
\begin{align*}
 \chi_{{}_{\Omega_i}}(\nabla^m\udel)^a=\chi_{{}_{\Omega_i}}\Bigg(\sum_{j=0}^{i}\big(\nabla^m(\eta_ju)^a\big)^{h_j}\Bigg)=\chi_{{}_{\Omega_i}}\Bigg(\sum_{j=0}^{\infty}\big(\nabla^m(\eta_ju)^a\big)^{h_j}\Bigg).
\end{align*}
But due to $\chi_{{}_{\Omega_i}}(\nabla^m\udel)^a\xrightarrow{i\rightarrow\infty}(\nabla^m\udel)^a$ in $L^1(\Omega,S^m(\R^n))$, it holds
\begin{align*}
 (\nabla^m\udel)^a=\sum_{j=0}^{\infty}\big(\nabla^m(\eta_ju)^a\big)^{h_j}.
\end{align*}
Since $\nabla^m(\eta_ju)^a\in L^1(\Omega,S^m(\R^n))$, we can choose $h_j$ small enough such that
\begin{align*}
 \|\big(\nabla^m(\eta_ju)^a\big)^{h_j}-\nabla^m(\eta_ju)^a\|_{1;\Omega}<\delta 2^{-(j+1)}
\end{align*}
and therefore $\|(\nabla^m\udel)^a-(\nabla^m u)^a\|_{1;\Omega}<\delta$. From $(\nabla^m\udel)^a\rightarrow(\nabla^m u)^a$ in $L^1(\Omega)$ it follows
\begin{align}
 \int_{\Omega}\sqrt{1+|(\nabla^m\udel)^a|^2}\dx\xrightarrow{\delta\downarrow 0}\int_{\Omega}\sqrt{1+|(\nabla^mu)^a|^2}\dx\label{d1.8}
\end{align}
and due to $|\nabla^m\udel|(\Omega)\rightarrow|\nabla^m u|(\Omega)$ and $|(\nabla^m\udel)^a|(\Omega)\rightarrow|(\nabla^m u)^a|(\Omega)$ it also holds 
\begin{align}
|(\nabla^m\udel)^s|(\Omega)\rightarrow|(\nabla^m u)^s|(\Omega).\label{d1.9}
\end{align}
Together with (\ref{d1.8}), this proves $\sqrt{1+|\nabla^m\udel|^2}(\Omega)\rightarrow \sqrt{1+|\nabla^m u|^2}(\Omega)$ for $\delta\downarrow 0$.

Now choose $h_j$ such that
\begin{align*}
 d(u,\udel)+\|u-\udel\|_{q;\Omega-D}<\delta/2.
\end{align*}
Note that $\eta_j\udel$ is $q$-integrable on $\{x\in\Omega\,|\,x_n<\phi(x_1,...,x_{n-1})+\tilde{h}_j\}$ with
\begin{align*} 
\tilde{h}_j:=\min\{h_{j-1},h_j,h_{j+1}\}. 
\end{align*}
 As outlined in \cite{DT} (after possibly adjusting $\Omega_0$), we can choose a sequence of positive numbers $(\eps_j)_{j=1}^\infty$ such that  the smooth functions 
\begin{align*}
\tilde{u}_\delta:=\sum\limits_{j=0}^\infty\rho_{\eps_j}\ast(\eta_j\udel)
\end{align*}
approximate $\udel$ in the sense that $d(\udel,\tilde{u}_\delta)<\delta/4$. Further, we can choose $\eps_j$ small enough to fulfill $B_{\eps_j}(0)-(0,...,0,\tilde{h}_j)\Subset C$ and $\|\udel-\tilde{u}_\delta\|_{q;\Omega}<\delta/4$. Altogether, $\tilde{u}_\delta$ approximates $u$ as required. The general case now follows as in part (a). \qed

\end{section}

\begin{section}{Generalized  and dual solutions}\label{Sec3}
In this section we are dealing with Theorems \ref{Thm1.1} and \ref{Thm1.2} concerning generalized solutions in $BV^m(\Omega)$ and dual solutions, respectively. Just for notational simplicity, we will confine ourselves to the case $m=2$ even though all results apply to arbitrary $m\in\N$ as well and abbreviate $J:=J^2$ in (\ref{1.12}). 
 As already mentioned in the introduction, both theorems will follow basically from the same arguments as in the first order case using the density result Theorem \ref{Thm1.3}. Thus, we are not going to give the details of the proofs but rather advise the reader to consult the work \cite{BF3} or \cite{FT} and restrict ourselves to an outline of how the density result is involved in each case. 

\noindent\textbf{\textit{ad Theorem \ref{Thm1.1}}}.  
First, using part (b) of Lemma \ref{Poinc}  it is clear that any $K$-minimizing sequence $(u_k)$ is uniformly bounded in $BV^2(\Omega)\cap L^2(\Omega-D)$ and hence, by the $BV^2$-compactness theorem and after passing to a suitable subsequence, there is a function $u\in BV^2(\Omega)$ such that $u_k\rightarrow u$ in $W^{1,1}(\Omega)$ and a.e. on $\Omega$. Furthermore, by an application of Fatou's lemma we see that in fact $u\in BV^2(\Omega)\cap L^2(\Omega-D)$.  Note that we have the following generalization of Lemma 3.1 in \cite{FT}, which  is a special case of the continuity theorem by Reshetnyak  (see \cite{Re} or \cite{AG} for a corrected version; compare also \cite{BS}, Theorem 2.4 and Remark 2.5):
\begin{Lem} For $w\in BV^2(\Omega)$ let
\begin{align*}
\tilde{K}[w]:=\intop_{\Omega}F\big((\nabla^2 w)^a\big)\dx+\intop_{\Omega}F^\infty\left(\frac{d(\nabla^2 w)^s}{d|(\nabla^2 w)^s|}\right)\,d|(\nabla^2 w)^s|.
\end{align*}
\begin{enumerate}[(a)]
\item Suppose $w_k,w\in BV^2(\Omega)$ are such that $w_k\rightarrow w$ in $W^{1,1}(\Omega)$ for $k\rightarrow\infty$. Then 
\begin{align}\label{3.1}
\tilde{K}[w]\leq \liminf_{k\rightarrow\infty}\tilde{K}[w_k].
\end{align}
\item If we know in addition 
\begin{align*}
\intop_{\Omega} \sqrt{1+|\nabla^2 w_k|^2}\rightarrow \intop_\Omega \sqrt{1+|\nabla^2 w|^2}\quad\text{for } k\rightarrow\infty,
\end{align*}
then $\tilde{K}[w]=\lim\limits_{k\rightarrow\infty}\tilde{K}[w_k]$.
\end{enumerate}
\end{Lem}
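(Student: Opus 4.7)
My plan is to reduce both parts to (suitable versions of) Reshetnyak's lower-semicontinuity and continuity theorems for convex functions of tensor-valued measures, applied to $\mu_k:=\nabla^2 w_k$ and $\mu:=\nabla^2 w$ on $\Omega$.

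\textbf{Weak$^*$ convergence of the measures.} In part (a) I may assume without loss of generality that $\liminf_k\tilde{K}[w_k]<\infty$; combined with the linear lower bound (\ref{1.10}) on $F$ and Lemma~\ref{Poinc}(b) this yields $\sup_k|\mu_k|(\Omega)<\infty$. In part (b), the area-integral hypothesis already supplies such a uniform mass bound. Since $w_k\to w$ in $W^{1,1}(\Omega)$, I identify the weak$^*$ limit of $(\mu_k)$ by integration by parts against test tensors $\varphi\in C_c^\infty(\Omega,S^2(\R^n))$; density of such $\varphi$ in $C_0(\Omega,S^2(\R^n))$ together with the mass bound then promotes this to genuine weak$^*$ convergence $\mu_k\rightharpoonup^*\mu$ in $\mathcal{M}(\Omega,S^2(\R^n))$.

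\textbf{Part (a).} Inequality (\ref{3.1}) is now precisely the statement of Reshetnyak-type lower-semicontinuity for the convex linear-growth integrand $F$ with recession function $F^\infty$, evaluated along the weak$^*$-convergent sequence $(\mu_k)$ and its limit $\mu$. The hypotheses (\ref{1.8})--(\ref{1.10}) together with $F(0)=0$ fit into the framework of the standard references (\cite{DT}, and also \cite{AFP}, \cite{BS}), which already handle $\R^N$-valued measures; the symmetric-tensor version is obtained by identifying $S^2(\R^n)$ with $\R^{n(n+1)/2}$.

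\textbf{Part (b).} The supplementary hypothesis
\begin{align*}
\intop_\Omega\sqrt{1+|\nabla^2 w_k|^2}\;\longrightarrow\;\intop_\Omega\sqrt{1+|\nabla^2 w|^2}
\end{align*}
is, by definition, area-strict (``Reshetnyak-strict'') convergence $\mu_k\to\mu$ in the sense of \cite{AG}/\cite{BS}. Under this strengthened convergence the Reshetnyak continuity theorem (in the corrected form of \cite{AG}; see also \cite{BS}, Theorem~2.4 and Remark~2.5) yields $\tilde{K}[w_k]\to\tilde{K}[w]$ for every continuous $F$ of linear growth admitting a recession function, which applies to our $F$.

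\textbf{Main obstacle.} The essential step is making sure that the cited continuity theorem genuinely accommodates the present setting: one must verify that area-strict convergence for the tensor-valued measures $\mu_k$ triggers the hypothesis of the continuity result in \cite{AG}/\cite{BS} (where the statement is usually phrased for vector-valued measures), and that our $F$ meets the regularity and growth assumptions required there. Once the weak$^*$ convergence in (a) and the area-strict convergence in (b) are in place, the remainder amounts to quoting and specializing the appropriate theorem.
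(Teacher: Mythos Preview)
Your proposal is correct and follows the same route as the paper: the lemma is stated there without a detailed proof and is simply declared to be a special case of Reshetnyak's lower-semicontinuity and continuity theorems (with references to \cite{Re}, the corrected version in \cite{AG}, and \cite{BS}, Theorem~2.4 and Remark~2.5). Your write-up is in fact more explicit than the paper's, since you spell out why $\nabla^2 w_k\rightharpoonup^*\nabla^2 w$ holds and why the area-integral hypothesis in (b) is precisely the area-strict convergence needed to invoke the continuity theorem; these verifications are implicit in the paper's one-line citation.
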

Hence, by (\ref{3.1})  we see that our limit function $u$ is in fact a minimizer of the generalized functional $K$ and from part (b) of the above lemma in combination with our density Theorem \ref{Thm1.3} (b) we conclude by the same reasoning as in \cite{FT}, that the infima of $J$ on $W^{2,1}(\Omega)\cap L^2(\Omega-D)$ and $K$ on $BV^2(\Omega)\cap L^2(\Omega-D)$ coincide. The remaining parts of Theorem \ref{Thm1.1} are proved following the lines of \cite{BF3}. \qed

\noindent\textbf{\textit{ad Theorem \ref{Thm1.2}}}. As in the first order case, we will obtain a maximizer of the dual functional $R$  from a sequence of solutions to a family of regularized problems.
For $1>\delta>0$ we consider
\begin{equation*}
 J_\delta[u]:=\intop_{\Omega}F_\delta(\nabla^2u)\dx+\frac{\lambda}{2}\intop_{\Omega-D}|u-f|^2\dx=\delta\intop_{\Omega}|\nabla^2u|^2\dx+I[u].
\end{equation*}
where
\begin{align*}
 F_\delta(P):=\delta|P|^2+F(P)\quad\text{for }P\in S^2(\R^n).
\end{align*}
Then we have 
\begin{Lem}\label{Lem41}
 \begin{enumerate}[ (a)]
  \item The problem $J_\delta\rightarrow\min$ in $W^{2,2}(\Omega)$ admits a unique solution $u_\delta$.
  \item We have  the uniform bound
 \begin{align}
  \sup_{\delta >0}\|u_\delta\|_{2,1;\Omega}<\infty.\label{4.1}
 \end{align}
 \item It holds  (not necessarily uniform in $\delta$) 
\[
u_\delta\in W^{3,2}_{\mathrm{loc}}(\Omega).
\]
\end{enumerate}
\end{Lem}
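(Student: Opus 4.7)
For part (a), I will apply the direct method on $W^{2,2}(\Omega)$. A minimizing sequence $(u_k)$ satisfies, by the lower bound (\ref{1.10}),
\[
\delta\|\nabla^2 u_k\|_{2;\Omega}^2+\tfrac{\lambda}{2}\|u_k-f\|_{2;\Omega-D}^2\le J_\delta[u_k]+\nu_3|\Omega|,
\]
so that both $\|\nabla^2 u_k\|_{2;\Omega}$ and $\|u_k\|_{1;\Omega-D}$ are controlled; Lemma \ref{Poinc}~(a) with $p=2$ then yields a uniform $W^{2,2}$-bound, hence weak precompactness. Convexity of the three summands of $J_\delta$ together with the compact embedding $W^{2,2}(\Omega)\hookrightarrow L^2(\Omega)$ for the data-fit term give sequential weak lower semicontinuity, producing a minimizer. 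Strict convexity of $J_\delta$ forces uniqueness.

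For part (b), comparing $u_\delta$ with the admissible competitor $v\equiv 0$ and using $F(0)=0$ gives
\[
J_\delta[u_\delta]\le J_\delta[0]=\tfrac{\lambda}{2}\|f\|_{2;\Omega-D}^2
\]
$\delta$-uniformly. The linear growth (\ref{1.10}) then provides $\|\nabla^2 u_\delta\|_{1;\Omega}\le C$, while the fit term gives $\|u_\delta\|_{2;\Omega-D}\le 2\|f\|_{2;\Omega-D}$, whence $\|u_\delta\|_{1;\Omega-D}\le C$. Lemma \ref{Poinc}~(a) with $p=1$ concludes (\ref{4.1}).

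For part (c), I plan to use the Nirenberg difference-quotient method on the Euler--Lagrange identity
\[
\intop_\Omega DF_\delta(\nabla^2 u_\delta):\nabla^2\varphi\,dx+\lambda\intop_{\Omega-D}(u_\delta-f)\varphi\,dx=0,\quad\varphi\in W^{2,2}(\Omega).
\]
Fixing $\eta\in C_0^\infty(\Omega)$ and, for small $|h|$, testing with $\varphi=-\Delta_{-h}^k(\eta^2\Delta_h^k u_\delta)$, discrete integration-by-parts produces as leading term $\intop_\Omega\eta^2\bigl(\Delta_h^k DF_\delta(\nabla^2 u_\delta)\bigr):\Delta_h^k\nabla^2 u_\delta\,dx$, which by the strict monotonicity of $DF_\delta$ is bounded below by $2\delta\intop_\Omega\eta^2|\Delta_h^k\nabla^2 u_\delta|^2\,dx$---the uniform-in-$h$ (but not in $\delta$) ellipticity furnished by the regularization. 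The data-fit contribution is controlled via Cauchy--Schwarz and the standard estimate $\|\Delta_{-h}^k\psi\|_{2;\Omega}\le\|\nabla\psi\|_{2;\Omega}$, producing an $h$-independent bound in terms of $\|u_\delta\|_{2,2;\Omega}$, which is finite by part (a). The lower-order error terms arising when derivatives of $\eta^2$ fall on the difference quotients are treated by transferring the difference quotient back onto $DF_\delta(\nabla^2 u_\delta)\in L^2(\Omega)$ and exploiting $|DF|\le\nu_1$ from (\ref{1.9}); Young's inequality absorbs the contribution from the linear $2\delta\nabla^2 u_\delta$-part into the main term, and a nested-cutoff argument takes care of the spatial support mismatch. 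Letting $h\to 0$ yields $\nabla^2 u_\delta\in W^{1,2}_{\mathrm{loc}}(\Omega)$, i.e.\ $u_\delta\in W^{3,2}_{\mathrm{loc}}(\Omega)$.

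The principal technical obstacle lies in this last estimate: since $F$ is only of class $C^1$ there is no pointwise Lipschitz bound on $DF$, so $\Delta_h^k DF(\nabla^2 u_\delta)$ cannot be controlled pointwise by $\Delta_h^k\nabla^2 u_\delta$. The workaround---transferring the difference quotient off of $DF(\nabla^2 u_\delta)$ and using only the $L^\infty$-bound $|DF|\le\nu_1$---combined with the uniformly elliptic main term from the $\delta$-regularization is what makes the Caccioppoli-type estimate close, and is precisely why the resulting bound is non-uniform in $\delta$.
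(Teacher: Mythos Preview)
Your argument is correct and follows the paper's route: direct method with Lemma~\ref{Poinc} for (a), comparison with $v\equiv 0$ for (b), and the difference-quotient technique for (c)---for which the paper gives no details beyond a reference to \cite{BF0}. One remark on the obstacle you flag: part~(c) is only invoked later under the hypotheses of Theorem~\ref{Thm1.4}, where $F\in C^2$ and the ellipticity bound~(\ref{1.15}) lets the standard Caccioppoli estimate (Cauchy--Schwarz with respect to the bilinear form $D^2F_\delta$) close directly, so the nested-cutoff workaround you devise for the $C^1$ setting---while a legitimate concern in the generality of Section~\ref{Sec3}---is not actually needed for the paper's applications.
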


\noindent \textit{Proof.}$\;\,\mathbf{(a)}$: For fixed $\delta$ consider a $J_\delta$-minimizing sequence $v_k\in W^{2,2}(\Omega)$ for which we have
\begin{align}
 &\sup_{k} \|\nabla^2v_k\|_{2;\Omega}<\infty,\label{42}\\
 &\sup_k\|v_k\|_{2;\Omega-D}<\infty.\label{43}
\end{align}
Thus, by quoting Lemma \ref{Poinc} it is
\begin{align}
\sup_k\|v_k\|_{2,2;\Omega}<\infty.
\end{align}
After passing to a subsequence we may therefore assume $v_k\rightharpoondown:\overline{v}$ in $W^{2,2}(\Omega)$ for a function $\overline{v}$ from this space and standard results on lower semicontinuity imply the $J_\delta$-minimality of $\overline{v}$. If $\tilde{v}$ denotes a second $J_\delta$-minimizer, then the structure of $J_\delta$ clearly implies $\nabla^2\tilde{v}=\nabla^2\overline{v}$ on $\Omega$ together with $\tilde{v}=\overline{v}$ on $\Omega-D$, and since we assume $\mathcal{L}^2(\Omega-D)>0$ we get that $\tilde{v}=\overline{v}$ on $\Omega$. Thus $(a)$ of Lemma \ref{Lem41} is established.\\
\noindent $\mathbf{(b)}$: We essentially adopt the arguments used in part (a): in place of (\ref{42}) and (\ref{43}) it holds
\begin{align}
 &\sup_{\delta>0}\|\nabla^2u_\delta\|_{1;\Omega}<\infty,\label{410}\\
 &\sup_{\delta>0}\|u_\delta\|_{2;\Omega-D}<\infty,\\
 &\sup_{\delta>0}\delta\intop_{\Omega}|\hess\udel|^2\dx<\infty \label{deludel},
\end{align}
which is a consequence of $J_\delta[u_\delta]\leq J_\delta[0]=\frac{\lambda}{2}\intop_{\Omega-D}f^2\dx$. The $W^{2,1}(\Omega)$-version of Lemma \ref{Poinc} then yields 
\begin{align}
\sup_{\delta>0}\|\nabla u_\delta\|_{1;\Omega}<\infty \label{411}
\end{align}
and 
\begin{align}
\sup_{\delta>0}\|u_\delta\|_{1;\Omega}<\infty.\label{412}
\end{align}
Now our claim (\ref{4.1}) follows from (\ref{410}), (\ref{411}) and (\ref{412}). \\
\noindent$\mathbf{(c)}$: This is a standard application of the difference quotient technique (compare, e.g. \cite{BF0}, '\textit{Step 2}' on p. 353).

By the compactness property of $BV^2(\Omega)$ (cf. Lemma \ref{BVc}) and (\ref{4.1}) we thus infer that there is a function $u\in BV^2(\Omega)$ such that for a suitable sequence $\delta\downarrow 0$ it holds $u_\delta\rightarrow u$ in $W^{1,1}(\Omega)$. Furthermore, an application  of Fatou's Lemma gives $u\in BV^2(\Omega)\cap L^2(\Omega-D)$. In view of the duality relation stated in Theorem \ref{Thm1.2} (b) it is reasonable to consider $\sigma_\delta:=DF_\delta(\nabla^2 \udel)$ and $\tau_\delta:=DF(\nabla^2\udel)$ and to investigate their limiting behavior as $\delta\downarrow 0$. Indeed, by the same arguments as in \cite{BF3} we can verify that (for a subsequence) $\sigma_\delta$ has a weak limit $\sigma\in L^2(\Omega,S^2(\R^{n}))$ and $\tau_\delta$ has a weak-$*$-limit in $L^\infty(\Omega,S^2(\R^{n}))$. Moreover, it holds $\sigma=\tau$ a.e.

Our density Theorem \ref{Thm1.3} (a) comes into play when proving that $\tau$ in fact maximizes $R$. Revising the steps of the proof in \cite{BF3}, we find that the critical point is exactly the following:
after passing to the limit $\delta\downarrow 0$ in the Euler-Lagrange equation satisfied by the $J_\delta$-minimizer $u_\delta$
\begin{align}\label{ELd}
 \delta\intop_\Omega \nabla^2\udel:\nabla^2\varphi\dx+\intop_{\Omega}\tau_\delta:\nabla^2\varphi\dx+\lambda\intop_{\Omega-D}(\udel-f)\varphi\dx=0,
\end{align}
which holds for all $\varphi\in W^{2,2}(\Omega)$, we obtain (recall (\ref{deludel}) as well as the definition of $u$) 
\begin{align*}
 \intop_{\Omega}\tau:\nabla^2\varphi\dx+\lambda\intop_{\Omega-D}(u-f)\varphi\dx=0.
\end{align*}
We actually need the validity of this equation for all $\varphi$ in the domain of definition of $J$, i.e. for functions $\varphi\in W^{2,1}(\Omega)\cap L^2(\Omega-D)$. But this can easily be obtained by approximating $\varphi\in W^{2,1}(\Omega)\cap L^2(\Omega-D)$ with a sequence of smooth functions $(\varphi_k)\subset C^\infty(\overline{\Omega})$ in the sense of Theorem \ref{Thm1.3} (a). All the remaining statements of Theorem \ref{Thm1.2} can be verified without further difficulties following the lines of \cite{BF3}. \qed
\end{section}

\begin{section}{Partial regularity: proof of Theorem 1.4}
As before, for the sake of simplicity we shall confine ourselves to the case $m=2$ and remark, that throughout this section summation convention with respect to Greek indices is applied. In order to carry out the calculations during the proof of Theorem \ref{Thm1.4}, we rely on the $\delta$-approximation as introduced in the preceding section, cf. Lemma \ref{Lem41}. The next lemma is of crucial importance:
\begin{Lem}\label{Lem42}
 Let the assumptions of Theorem \ref{Thm1.4} hold, in particular we have (\ref{1.15}) with parameter $\mu\in (1,\tfrac{3}{2})$ in case $D\neq\emptyset$ and $\mu\in(1,2)$, if the case $D=\emptyset$ is considered. Then (with $\udel$ as in Lemma \ref{Lem41}) it holds
\begin{align}\label{4.14}
 \varphi_\delta:=\big(1+|\nabla^2u_\delta|\big)^{1-\frac{\mu}{2}}\in W^{1,2}_{\mathrm{loc}}(\Omega)\,\text{ uniformly in }\delta,
\end{align}
which means
\begin{align*}
 \sup_{\delta>0}\|\varphi_\delta\|_{1,2;\Omega^*}=c(\Omega^*)<\infty
\end{align*}
for any subdomain $\Omega^*\Subset\Omega$. Moreover, we have the uniform estimate
\begin{align}\label{est428}
 \sup_{\delta>0} \intop_{\Omega^*}D^2F_\delta(\hess\udel)\big(\palpha\hess\udel,\palpha\hess\udel\big)\dx\leq c(\Omega^*)<\infty.
\end{align}

\end{Lem}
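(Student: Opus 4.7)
The plan is to derive a Caccioppoli-type inequality for the natural bilinear form $D^2F_\delta(\nabla^2 u_\delta)$ acting on the third derivatives of $u_\delta$. Since $u_\delta \in W^{3,2}_{\mathrm{loc}}(\Omega)$ by Lemma \ref{Lem41}(c) but $f$ is merely $L^\infty(\Omega-D)$, the differentiation of the Euler--Lagrange equation \eqref{ELd} is implemented through finite differences: for $\eta \in C_0^\infty(\Omega)$ with $\eta \equiv 1$ on $\Omega^*$, fix $\alpha \in \{1,\dots,n\}$ and test \eqref{ELd} with $\varphi := \Delta_{-h}^\alpha(\eta^2 \Delta_h^\alpha u_\delta)$, where $\Delta_h^\alpha g(x) := h^{-1}(g(x+he_\alpha)-g(x))$ and $|h|$ is small enough that all shifts stay inside $\Omega$. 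The discrete integration by parts $\int \phi\,\Delta_{-h}^\alpha\psi = \int(\Delta_h^\alpha\phi)\psi$ transfers the difference quotient to the elliptic factors, so that after expanding $\nabla^2(\eta^2 \Delta_h^\alpha u_\delta)$ with the product rule and sending $h\to 0$ (legitimate because $\nabla^3 u_\delta \in L^2_{\mathrm{loc}}$) the principal term becomes exactly the integrand on the left of \eqref{est428}.

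The remaining contributions on the right-hand side are of two kinds: (i) error terms carrying at least one derivative of $\eta$, schematically of the form $D^2F_\delta(\nabla^2 u_\delta)(\partial_\alpha\nabla^2 u_\delta, \nabla\eta\otimes \nabla\partial_\alpha u_\delta)$ and $D^2F_\delta(\nabla^2 u_\delta)(\partial_\alpha\nabla^2 u_\delta, \partial_\alpha u_\delta\cdot\nabla^2(\eta^2))$, and (ii) the data-fitting term $\lambda\int_{\Omega-D}\eta^2|\partial_\alpha u_\delta|^2\,dx$. I would dispatch (i) via Cauchy--Schwarz in the $D^2F_\delta$-inner product followed by Young's inequality, absorbing an $\varepsilon$-fraction of the principal integral back on the left, and using the upper ellipticity bound in \eqref{1.15}, $D^2F_\delta(\nabla^2 u_\delta)(A,A) \leq (2\delta+\nu_5(1+|\nabla^2 u_\delta|)^{-1})|A|^2$, to convert what remains into
\[
C(\eta)\intop_{\mathrm{spt}\,\eta}(1+|\nabla^2 u_\delta|)^{-1}\bigl(|\nabla\partial_\alpha u_\delta|^2 + |\partial_\alpha u_\delta|^2\bigr)\dx \leq C(\eta)\intop_{\mathrm{spt}\,\eta}\bigl(|\nabla^2 u_\delta| + |\nabla u_\delta|^2\bigr)\dx.
\]
The first summand is uniformly bounded by \eqref{4.1}, while the uniform $L^2$-bound on $\nabla u_\delta$ is obtained from the sharp Sobolev embedding $W^{2,1}(\R^2)\hookrightarrow W^{1,2}(\R^2)$, which holds precisely in dimension $n=2$ via $W^{1,1}(\R^2)\hookrightarrow L^{2,1}(\R^2)\hookrightarrow L^2(\R^2)$. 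Summing over $\alpha$ yields \eqref{est428}.

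The passage from \eqref{est428} to \eqref{4.14} is then a chain-rule observation:
\[
|\nabla\varphi_\delta|^2 = \bigl(1-\tfrac{\mu}{2}\bigr)^2(1+|\nabla^2 u_\delta|)^{-\mu}\bigl|\nabla|\nabla^2 u_\delta|\bigr|^2 \leq c(1+|\nabla^2 u_\delta|)^{-\mu}\sum_\alpha|\palpha\hess u_\delta|^2,
\]
which by the lower bound in \eqref{1.15} is controlled by $(c/\nu_4)\sum_\alpha D^2F_\delta(\hess u_\delta)(\palpha\hess u_\delta,\palpha\hess u_\delta)$, so that \eqref{est428} delivers the uniform $L^2$-bound on $\nabla\varphi_\delta$ over $\Omega^*$. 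The $L^2$-part of the $W^{1,2}$-norm is handled separately but trivially, since $\varphi_\delta^2 = (1+|\nabla^2 u_\delta|)^{2-\mu} \leq 1+|\nabla^2 u_\delta|$ because $2-\mu\in(0,1)$, so another appeal to \eqref{4.1} closes the argument.

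The step I expect to be most delicate is the handling of the data-fitting contribution (ii) above, specifically the need for a uniform control of $\int_{\Omega-D}\eta^2|\Delta_h^\alpha u_\delta|^2\dx$ independent of $h$. The indicator $\chi_{\Omega-D}$ is not translation-invariant, so transferring a difference quotient past it produces surface-type contributions along $\partial D\cap\Omega$ that are not uniformly controllable by a naive estimate. When $D=\emptyset$ this difficulty is absent: the data term provides an a priori $L^2$-bound on $u_\delta$ on all of $\Omega$, and \eqref{1.15} can be matched with any $\mu<2$. When $D\neq\emptyset$ these boundary contributions need to be absorbed via an interpolation between the Caccioppoli estimate just produced and the $L^2(\Omega-D)$-bound from Lemma \ref{Lem41}(b); balancing the resulting powers of $(1+|\nabla^2 u_\delta|)$ is what forces the more restrictive range $\mu<\tfrac{3}{2}$ appearing in the hypothesis.
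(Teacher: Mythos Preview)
Your overall skeleton --- differentiate the Euler equation, test with $\eta^{2k}\partial_\alpha u_\delta$, absorb the cross terms by Cauchy--Schwarz/Young in the $D^2F_\delta$-form, then read off $|\nabla\varphi_\delta|^2\le c\,D^2F_\delta(\hess u_\delta)(\partial_\alpha\hess u_\delta,\partial_\alpha\hess u_\delta)$ --- matches the paper's approach. The handling of the error terms (i) and the passage from \eqref{est428} to \eqref{4.14} are fine.

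The gap is in the data term. After the difference quotient is shifted onto the elliptic factor and you let $h\to 0$, the right-hand side is \emph{not} $\lambda\int_{\Omega-D}\eta^2|\partial_\alpha u_\delta|^2\dx$; it is
\[
\lambda\intop_{\Omega-D}(u_\delta-f)\,\partial_\alpha\!\big(\eta^{2k}\partial_\alpha u_\delta\big)\dx,
\]
with no further integration by parts available (you cannot differentiate $f$, and for $D\neq\emptyset$ you cannot move $\partial_\alpha$ past $\chi_{\Omega-D}$). In Case~1 ($D=\emptyset$) the $u_\delta$-part \emph{does} integrate by parts to a favourable-sign term $-\lambda\int\eta^{2k}|\nabla u_\delta|^2$, while the $f$-part is bounded directly by $\|f\|_\infty(\|\nabla u_\delta\|_1+\|\hess u_\delta\|_1)$; this is how the paper closes Case~1, and your sketch misses both halves.

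For Case~2 the dangerous piece is $\int_{\Omega-D}|u_\delta|\,\eta^{2k}|\hess u_\delta|\dx$, and the vague ``interpolation between Caccioppoli and the $L^2(\Omega-D)$-bound'' is not a mechanism. What actually works (and what the paper does) is: split by Young with exponents $\mu$ and $\mu/(\mu-1)$, then control $\int\eta^{2k}|\hess u_\delta|^\mu=\int(\eta^k\psi_\delta)^2$ with $\psi_\delta:=(1+|\hess u_\delta|)^{\mu/2}$ via the two-dimensional Sobolev inequality $\|\eta^k\psi_\delta\|_2\le c\|\nabla(\eta^k\psi_\delta)\|_1$, rewrite $\nabla\psi_\delta$ in terms of $\nabla\varphi_\delta$, and arrive at
\[
c\intop_{B_{2r}}\varphi_\delta^{\frac{4\mu-4}{2-\mu}}\dx\cdot\intop_{B_{2r}}\eta^{2k}|\nabla\varphi_\delta|^2\dx.
\]
The first factor behaves like $\int|\hess u_\delta|^{2\mu-2}$, and it is precisely $2\mu-2<1$ (i.e.\ $\mu<\tfrac32$) that makes this small of order $r^{2s}$ via H\"older and \eqref{4.1}, so it can be absorbed on the left for $r\le r_0$. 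This absorption step is the substantive content of the Case~2 argument and is missing from your proposal. (Incidentally, the need for enough powers of $\eta$ to run Cauchy--Schwarz in the $\psi_\delta$-step is why the paper uses $\eta^6$ rather than $\eta^2$.)
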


Accepting Lemma \ref{Lem42} for the moment, we present the 

\noindent \textbf{\textit{proof of Theorem \ref{Thm1.4} part $\mathbf{(a)}$ and $\mathbf{(b)}$}}. As seen in section \ref{Sec3}, due to (\ref{4.1}) there is a function $u\in BV^2(\Omega)$ such that (for a suitable sequence $\delta\rightarrow 0$) we have $u_\delta\rightarrow u$ in $W^{1,1}(\Omega)$ (and even stronger convergences by the embedding theorem valid for $BV^2(\Omega)$). Quoting (\ref{4.14}) we find for any $p<\infty$
\begin{align}\label{4.15}
 \nabla^2u_\delta\in L^p_{\text{loc}}(\Omega,S^2(\R^2))\,\text{ uniformly in }\delta,
\end{align}
and by (\ref{4.15}) it is immediate that
\begin{align}\label{4.16}
 u\in W^{2,p}_{\text{loc}}(\Omega),\;p<\infty.
\end{align}
The estimate (\ref{est428}) together with the condition of $\mu$-ellipticity (\ref{1.15}) implies
\begin{align*}
 \sup_{\delta>0}\intop_{\Omega^*}\big(1+|\hess\udel|\big)^{-\mu}|\nabla^3\udel|^2\dx\leq c(\Omega^*)<\infty.
\end{align*}
For $s\in (1,2)$, in connection with (\ref{4.15}) this yields
\begin{align*}
\intop_{\Omega^*}|\nabla^3\udel|^s \dx&=\intop_{\Omega^*}\big(1+|\nabla^2\udel|\big)^{-\mu\frac{s}{2}}|\nabla^3\udel|^s\big(1+|\nabla^2\udel|\big)^{\mu\frac{s}{2}}\dx\\
&\leq\left(\intop_{\Omega^*}\big(1+|\nabla^2\udel|\big)^{-\mu}|\nabla^3\udel|^2 \dx\right)^{\frac{s}{2}}\left(\intop_{\Omega^*}\big(1+|\nabla^2\udel|\big)^\frac{\mu s}{2-s}\dx\right)^\frac{2-s}{2}\\
&\leq c(\Omega^*)<\infty,
\end{align*}
thus
\label{W3s}
\begin{align*}
\sup_{\delta>0}\|\udel\|_{3,s;\Omega^*}\leq c(s,\Omega^*)<\infty\quad\forall s<2
\end{align*}
and we therefore have strong convergence $u_\delta\rightarrow u$ in $W^{2,p}_\text{loc}(\Omega)$. All in all, we find that the following convergences hold true:
\label{conv}
\begin{align*}
 \begin{cases}
   \udel\rightharpoondown u\text{ in }W^{3,s}_{\text{loc}}(\Omega),\;s<2,\\
   \udel\rightarrow u \text{ in } W^{2,p}_\text{loc}(\Omega),\;p<\infty,\\
   \varphi_\delta\rightharpoondown\varphi \text{ in }W^{1,2}_{\text{loc}}(\Omega)
 \end{cases}
\end{align*}
with $\varphi=\big(1+|\nabla^2 u|\big)^{1-\frac{\mu}{2}}$, so that
\begin{align}\label{4.37}
\big(1+|\nabla^2 u|\big)^{1-\frac{\mu}{2}}\in W^{1,2}_\text{loc}(\Omega).
\end{align}

At the same time we have $W^{2,p}_{\text{loc}}(\Omega)\cap BV^2(\Omega)\subset W^{2,1}(\Omega)$ so that
\begin{align}\label{4.17}
 u\in W^{2,1}(\Omega)
\end{align}
on account of (\ref{4.16}). Thus $J[u]$ is well defined (recall (1.12) and $J=J^2$) and by lower semicontinuity it holds
\begin{align}\label{4.18}
 J[u]\leq\liminf_{\delta\rightarrow 0}J[u_\delta]
\end{align}
which is a consequence of $u_\delta\rightharpoondown u$ in $W^{2,p}_{\text{loc}}(\Omega)$ for any finite $p$. Quoting the $J_\delta$-minimality of $u_\delta$ we deduce from (\ref{4.18})
\begin{align*}
 J[u]\leq\liminf_{\delta\rightarrow 0}J_\delta[u_\delta]\leq J[v]
\end{align*}
for any $v\in W^{2,2}(\Omega)$. Therefore it holds 
\begin{align*}
 J[u]\leq J[w],\;w\in W^{2,1}(\Omega),
\end{align*}
since $\|w-v_k\|_{2,1;\Omega}\rightarrow 0$ for a suitable sequence $v_k\in W^{2,2}(\Omega)$. Altogether it is shown (recall (\ref{4.17})) that $u$ solves (\ref{1.12}). Assume that we have another minimizer $\widetilde{u}\neq u$ on a set of positive measure. From strict convexity we infer $\nabla^2\widetilde{u}=\nabla^2{u}$ a.e. on $\Omega$ as well as $\widetilde u= u$ a.e. on $\Omega-D$. Hence $u$ and $\widetilde{u}$ differ by a polynomial of degree at most $1$ which vanishes on $\Omega-D$. Since $\mathcal{L}^2(\Omega-D)>0$ it follows $\widetilde{u}=u$ a.e. This proves $(a)$ of Theorem \ref{Thm1.4}. For part $(b)$ we just refer to (\ref{4.16}).  \qed

We proceed with the

\noindent\textbf{\textit{proof of Lemma \ref{Lem42}}}. From Lemma \ref{Lem41} (c) and the $J_\delta$-minimality of $u_\delta$ it follows $(\alpha=1,2)$
\begin{align}\label{4.19}
 \intop_{\Omega}D^2F_\delta(\nabla^2u_\delta)(\partial_\alpha\nabla^2u_\delta,\nabla^2v)\dx=\lambda\intop_{\Omega-D}(u_\delta-f)\partial_\alpha v \dx
\end{align}
for any $v\in W^{2,2}(\Omega)$ such that $\text{spt }v$ is compactly contained in $\Omega$. Let us fix a disk $B_{2r}(x_0)$ contained in some arbitrary subregion $\Omega^*\Subset\Omega$. Consider a cut-off function $\eta$ such that 
\begin{align*}
 \begin{cases}
  0\leq\eta\leq1,\;\text{spt }\eta\subset B_{2r}(x_0),\\
  \eta=1\;\text{ on } B_r(x_0),\;|\nabla^l\eta|\leq cr^{-l},\;l=1,2
 \end{cases}
\end{align*}
and choose $v=\eta^6\partial_\alpha u_\delta$ in equation (\ref{4.19}). Abbreviating
\begin{align}
&T_1:=\intop_{\Omega}D^2F_\delta(\nabla^2u_\delta)(\partial_\alpha\nabla^2u_\delta,\partial_\alpha\nabla^2u_\delta)\eta^6 \dx \label{4.20}\\
&T_2:=\intop_{\Omega}D^2F_\delta(\nabla^2u_\delta)(\partial_\alpha\nabla^2u_\delta,2\nabla\eta^6\otimes\nabla\partial_\alpha u_\delta)\dx,\label{4.22}\\
&T_3:=\intop_{\Omega}D^2F_\delta(\nabla^2u_\delta)(\partial_\alpha\nabla^2u_\delta,\nabla^2\eta^6\partial_\alpha u_\delta) \dx\label{4.23}
\end{align}
with an obvious meaning of ``$\otimes$``, we obtain from (\ref{4.19})-(\ref{4.23})
\begin{align}\label{4.21}
 T_1+T_2+T_3=\lambda\intop_{\Omega-D}(u_\delta-f)\partial_\alpha(\eta^6\partial_\alpha u_\delta).
\end{align}
 Applying the Cauchy-Schwarz inequality to the bilinear form $D^2F_\delta(\nabla^2u_\delta)$ we get
\begin{align*}
 |T_2|\leq \intop_{\Omega}&\Big(D^2F_\delta(\nabla^2u_\delta)\big(\eta^3\palpha\hess\udel,\eta^3\palpha\hess\udel\big)\Big)^{\frac{1}{2}}\\
 &\cdot \Big(D^2F_\delta(\nabla^2u_\delta)\big(12\eta^2\nabla\eta\otimes\palpha\nabla\udel, 12\eta^2\nabla \eta\otimes\palpha\nabla\udel\big)\Big)^{\frac{1}{2}}\dx,
\end{align*}
hence, by Young's inequality
\begin{align*}
 |T_2|&\leq\eps T_1+c(\eps)\intop_{\Omega}\big|D^2F_\delta(\hess\udel)\big|\eta^4|\nabla\eta|^2|\hess\udel|^2\dx\\
 &=:\eps T_1+c(\eps)T_4.
\end{align*}
In $T_4$ we make use of inequality (\ref{1.15}), the bound stated in (\ref{4.1}) and the fact that
\begin{align*}
 \sup_{\delta>0}\delta\intop_{\Omega}|\hess\udel|^2\dx<\infty
\end{align*}
(see (\ref{deludel})) with the result $T_4\leq c(\eta)$, so that
\begin{align}\label{4.24}
 |T_2|\leq\eps T_1+c(\eps,\eta).
\end{align}
In a similar way we obtain
\begin{align*}
|T_3|\leq\eps T_1+c(\eps,\eta)\intop_{\Omega}|D^2F_\delta(\hess\udel)||\nabla\udel|^2\dx
\end{align*}
and since for example
\begin{align*}
\sup_{\delta>0}\|\nabla u_\delta\|_{2;\Omega}<\infty
\end{align*}
on account of (\ref{4.1}) and Sobolev's embedding theorem, we find
\begin{align}\label{4.25}
|T_3|\leq \eps T_1+c(\eps,\eta).
\end{align}
Putting together (\ref{4.21})-(\ref{4.25}) and choosing $\eps=\frac{1}{4}$ it follows
\begin{align}\label{4.26}
\begin{split}
&\intop_{B_{2r}(x_0)}D^2F_\delta(\hess\udel)(\palpha\hess\udel,\palpha\hess\udel)\eta^6\dx\\
&\leq c(\eta)+2\lambda\underset{\mbox{$=:T_5$}}{\underbrace{\intop_{\Omega-D}(u_\delta-f)\palpha(\eta^6\palpha\udel)\dx}}.
\end{split}
\end{align}

\noindent\textbf{\textit{Case 1}}: $D=\emptyset$

It holds
\begin{align*}
T_5&=\intop_{B_{2r}(x_0)}\udel\palpha(\eta^6\palpha\udel)\dx-\intop_{B_{2r}(x_0)}f\palpha(\eta^6\palpha\udel)\dx\\
&=-\intop_{B_{2r}(x_0)}\eta^6|\nabla\udel|^2\dx-\intop_{B_{2r}(x_0)}f\palpha(\eta^6\palpha\udel)\dx,
\end{align*}
and by the boundedness of $f$ we get from (\ref{4.26})
\begin{align}\label{4.27}
\begin{split}
 \intop_{B_{2r}(x_0)}D^2F_\delta(\nabla^2\udel)(\palpha\nabla^2\udel,\palpha\nabla^2\udel)\eta^6 \dx+2\lambda\intb\eta^6|\nabla\udel|^2\dx\\
\leq c(\eta)+\|f\|_{\infty;\Omega}\left\{\intb|\nabla\eta^6||\nabla\udel|\dx+\intb\eta^6|\hess\udel|\dx\right\}.
\end{split}
\end{align}
Clearly all terms on the right-hand side of (\ref{4.27}) are bounded independent of $\delta$, hence (\ref{4.27}) implies
\begin{align}\label{4.28}
\sup_{\delta>0}\intop_{\Omega^*}D^2F_\delta(\hess\udel)(\palpha\nabla^2\udel,\palpha\nabla^2\udel)\dx<\infty.
\end{align}
Recalling the definition of $\varphi_\delta$ stated in (\ref{4.14}) it is immediate that
\begin{align}\label{4.29}
\sup_{\delta>0}\|\varphi_\delta\|_{2;\Omega}<\infty.
\end{align}
At the same time we obtain from (\ref{1.15}) and (\ref{4.28})
\begin{align*}
\sup_{\delta>0}\|\nabla\varphi_\delta\|_{2;\Omega^*}<\infty,
\end{align*}
which together with (\ref{4.29}) proves our claims (\ref{4.14}) and (\ref{est428}). We wish to note that the requirement $\mu<2$ just enters through the fact that in the definition of $\varphi_\delta$ the exponent $1-\frac{\mu}{2}$ must be positive.

\noindent\textbf{\textit{Case 2}}: $D\neq\emptyset$

Again we look at the relevant item $T_5$ in (\ref{4.26}) and recall that the part $\intop_{\Omega-D}f\palpha(\eta^6\palpha\udel)\dx$ is uncritical since by (\ref{4.1}) its absolute value can be bounded by a constant $c(\eta)$. So let
\begin{align*}
 \widetilde{T_5}:=\intop_{\Omega-D}\udel\palpha(\eta^6\palpha\udel)\dx
\end{align*}
and observe
\begin{align}\label{4.30}
|\widetilde{T_5}|\leq \into|\udel||\nabla\eta^6||\nabla\udel|\dx+\into|\udel|\eta^6|\hess\udel|\dx=:T_6+T_7.
\end{align}
From (\ref{4.1}) and Sobolev's embedding theorem it is immediate that 
\begin{align}\label{4.31}
T_6\leq c(\eta).
\end{align}
To the quantity $T_7$ we apply Young's inequality
\begin{align}\label{4.32}
\begin{split}
\begin{rightcases}
T_7&\leq c\left(\into|\udel|^{\frac{\mu}{\mu-1}}\eta^6\dx+\into\eta^6|\hess\udel|^\mu \dx\right)\\
&\underset{(\ref{4.1})}{\leq}c\left(1+\into\eta^6|\hess\udel|^\mu \dx\right) \\
&\leq c\left(1+\into (\eta^3\psi_\delta)^2 \dx\right)
\end{rightcases}
\end{split}
\end{align}
with $\psi_\delta:=\big(1+|\hess\udel|\big)^{\frac{\mu}{2}}$.
Sobolev's inequality yields 
\begin{align}\label{4.33}
\begin{split}
\into(\eta^3\psi_\delta)^2 \dx\leq c\left(\into|\nabla (\eta^3\psi_\delta)|\dx\right)^2\leq c(\eta)+c \left(\into \eta^3|\nabla\psi_\delta|\dx\right)^2,
\end{split}
\end{align}
where in the last step we used (\ref{4.1}) together with $\mu<2$. Observing (recall (\ref{4.14})) the relation
\begin{align*}
\psi_\delta=\varphi_\delta^{\frac{\mu}{2-\mu}}
\end{align*}
we obtain
\begin{align}\label{4.34}
\begin{split}
\intb\eta^3|\nabla \psi_\delta|\dx\leq c\intb\eta^3|\nabla\varphi_\delta|\varphi_\delta^{\frac{\mu}{2-\mu}-1}\dx\\
\leq c\left(\intb\eta^6|\nabla\varphi_\delta|^2 \dx\right)^\frac{1}{2}\left(\intb\varphi_\delta^\frac{4\mu-4}{2-\mu}\dx\right)^\frac{1}{2}.
\end{split}
\end{align}
Inserting (\ref{4.34}) into (\ref{4.33}) and going back to (\ref{4.32}) we obtain
\begin{align}\label{4.35}
T_7\leq c(\eta)+c\intb\varphi_\delta^{\frac{4\mu-4}{2-\mu}}\dx\intb\eta^6|\nabla\varphi_\delta|^2\dx.
\end{align}
Recalling (\ref{4.30})  and (\ref{4.31}), we deduce from (\ref{4.35})
\begin{align}\label{4.36}
\big|\widetilde{T_5}\big|\leq c(\eta)+c\intb\varphi_\delta^\frac{4\mu-4}{2-\mu}\dx\intb\eta^6|\nabla\varphi_\delta|^2\dx.
\end{align}
With (\ref{4.36}) at hand we return to (\ref{4.26}) observing first
\begin{align*}
|\nabla\varphi_\delta|^2\leq c\, D^2F_\delta(\hess\udel)(\palpha\hess\udel,\palpha\hess\udel).
\end{align*}
Second we note
\begin{align*}
\varphi_\delta^\frac{4\mu-4}{2-\mu}= \left(1+|\hess\udel|\right)^{\frac{2-\mu}{2}\frac{4\mu-4}{2-\mu}}\sim |\hess\udel|^{2\mu-2}
\end{align*}
with exponent $2\mu-2<1$ on account of our assumption $\mu<\frac{3}{2}$ in Case 2. Thus, by (\ref{4.1}) and by Hölder's inequality we obtain
\begin{align*}
\intb\varphi_\delta^\frac{4\mu-4}{2-\mu}\dx\leq c\mathcal{L}^2\left(B_{2r}(x_0)\right)^s
\end{align*}
for some positive exponent $s$ and  (\ref{4.26}) in combination with our previous results yields
\begin{align*}
\left(1-c\mathcal{L}^2\left(B_{2r}(x_0)\right)^s\right)\intb \eta^6D^2F_\delta(\hess\udel)(\palpha\hess\udel,\palpha\hess\udel)\leq c(\eta).
\end{align*}
From this inequality we deduce: if we restrict ourselves to radii $r\leq r_0$ for some $r_0>0$ independent of $\delta$, then it holds $\intb|\nabla\varphi_\delta|^2\dx\leq c(r)<\infty$, thus (\ref{4.14}) and (\ref{est428}) are established in Case 2. \qed

We now turn to the

\noindent \textbf{\textit{proof of Theorem \ref{Thm1.4} part $\mathbf{(c)}$}}.  The basis for our further proof is the well known blow-up technique whose idea together with appropriate references is explained in the monograph \cite{Gia}. We also suggest to consult the paper \cite{EG}. Our arguments follow these ideas and their higher order version given in \cite{AF}, section 3, where now some adjustments become necessary due to the presence of the error term $\intop_{\Omega-D}(u-f)^2\dx$ in our situation.

For disks $B_\rho(x)\Subset\Omega$ we define the  excess function  by
\begin{align*}
E(x,\rho):=\mint \limits_{B_\rho(x)}\left|\hess u(y)-(\hess u)_{x,\rho}\right|^2 dy,
\end{align*}
where $u\in W^{2,1}(\Omega)$ is the minimizer from part (a). Note, that due to part (b) of Theorem \ref{Thm1.4} the excess is well defined. By $(\hess u)_{x,\rho}:=\txtmint_{B_{\rho}(x)}\hess u(y)\,dy$ we denote the mean value of $\nabla^2 u$ on the disk $B_\rho(x)$. The essential step is to show the following excess-decay lemma:
\begin{Lem}[Blow-up Lemma]\label{Bl}
Given $L>0$, define the constant $C^*(L)$ according to (\ref{4.48}) below and set $C_*=C_*(L):=2C^*(L)$. Then, for any $\tau\in (0,\frac{1}{2})$ there is an $\eps=\eps(L,\tau)$ such that whenever
\begin{align}\label{4.38}
|(\hess u)_{x,r}|\leq L,\; E(x,r)+r\leq \eps,
\end{align}
then also
\begin{align}\label{4.39}
E(x,\tau r)\leq \tau^2 C_*(L)(E(x,r)+r)
\end{align}
for disks $B_r(x)\Subset\Omega$.
\end{Lem}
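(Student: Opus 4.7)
I would argue by contradiction via the classical blow-up scheme for higher-order partial regularity (cf.\ \cite{AF}, Section~3), modified to accommodate the fidelity term. Suppose the conclusion fails for some $\tau\in(0,\tfrac12)$; then one finds disks $B_{r_m}(x_m)\Subset\Omega$ with $A_m:=(\nabla^2u)_{x_m,r_m}$ satisfying $|A_m|\leq L$, $E_m:=E(x_m,r_m)+r_m\to 0$, but $E(x_m,\tau r_m)>\tau^2 C_*(L)E_m$. Set $\lambda_m:=\sqrt{E_m}$, choose quadratic polynomials $P_m$ with $\nabla^2P_m\equiv A_m$ plus an affine part making $v_m$ and $\nabla v_m$ mean-zero on $B_1$, and define
\[
v_m(y):=\frac{u(x_m+r_m y)-P_m(x_m+r_m y)}{\lambda_m r_m^2},\qquad y\in B_1.
\]
Then $\nabla^2v_m(y)=\lambda_m^{-1}(\nabla^2u(x_m+r_m y)-A_m)$, so that $\mint_{B_1}\nabla^2v_m\,dy=0$ and $\mint_{B_1}|\nabla^2v_m|^2\,dy\leq 1$; up to subsequences $A_m\to A_\infty$ with $|A_\infty|\leq L$ and $v_m\rightharpoonup v$ weakly in $W^{2,2}(B_1)$.

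\textbf{Limiting equation and Campanato decay.} By part~(b), $u\in W^{2,p}_{\mathrm{loc}}(\Omega)$, so the Euler-Lagrange equation $\intop_\Omega DF(\nabla^2u):\nabla^2\varphi\,dx+\lambda\intop_{\Omega-D}(u-f)\varphi\,dx=0$ is available for compactly supported test functions. Testing with $\varphi(x)=\lambda_m r_m^2\phi((x-x_m)/r_m)$ for $\phi\in C_c^\infty(B_1)$, using $\intop_{B_1}DF(A_m):\nabla^2\phi\,dy=0$, and dividing by $\lambda_m^2 r_m^2$ yields
\[
\intop_{B_1}\Big(\intop_0^1 D^2F(A_m+s\lambda_m\nabla^2v_m)\,ds\Big)(\nabla^2v_m,\nabla^2\phi)\,dy=O(r_m^2/\lambda_m).
\]
The remainder is $O(r_m^{3/2})$ because $\lambda_m\geq\sqrt{r_m}$ and $u-f$ is locally bounded --- precisely what the additive $+r$ in (\ref{4.38}) supplies. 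Since $\lambda_m\nabla^2v_m\to 0$ in $L^2$, dominated convergence combined with the upper bound from (\ref{1.15}) gives strong $L^p$-convergence of the averaged coefficient matrix for every finite $p$, so passing to the limit against $\nabla^2v_m\rightharpoonup\nabla^2v$ produces
\[
\intop_{B_1}D^2F(A_\infty)(\nabla^2v,\nabla^2\phi)\,dy=0\qquad\text{for all }\phi\in C_c^\infty(B_1).
\]
This is a linear fourth-order constant-coefficient elliptic system whose ellipticity is controlled in terms of $L$ via (\ref{1.15}) at $A_\infty$. Standard Campanato estimates for its smooth solutions furnish a constant $C^*(L)$ with
\[
\mint_{B_\tau}|\nabla^2v-(\nabla^2v)_{0,\tau}|^2\,dy\leq C^*(L)\tau^2\mint_{B_1}|\nabla^2v|^2\,dy\leq C^*(L)\tau^2,
\]
which we take as the definition of $C^*(L)$ announced in the statement.

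\textbf{Strong convergence and main obstacle.} Since the rescaled excess equals $E(x_m,\tau r_m)/E_m=\mint_{B_\tau}|\nabla^2v_m-(\nabla^2v_m)_{0,\tau}|^2\,dy$, transferring the decay to $v_m$ requires strong $L^2_{\mathrm{loc}}(B_1)$-convergence of $\nabla^2v_m$ to $\nabla^2v$. This is established by a Caccioppoli-type argument: subtracting the Euler-Lagrange equations for $u$ and for the competitor $P_m+\lambda_m r_m^2 v$ (suitably extended), testing the difference with $\eta^2(v_m-v)$ for an appropriate cutoff, and exploiting the lower bound in (\ref{1.15}) controls $\intop\eta^2|\nabla^2(v_m-v)|^2\,dy$ by terms involving $|v_m-v|$ and $|\nabla(v_m-v)|$ which vanish by Rellich, plus an $o(1)$ fidelity error. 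For large $m$ this forces $\mint_{B_\tau}|\nabla^2v_m-(\nabla^2v_m)_{0,\tau}|^2\,dy<2C^*(L)\tau^2=C_*(L)\tau^2$, contradicting the defining inequality of our sequence. The main obstacle is precisely this strong-convergence step: weak convergence alone yields the inequality in the wrong direction, and controlling the nonlinearity together with the fidelity term uniformly in the Caccioppoli estimate is where the uniform bound $|A_m|\leq L$ from (\ref{4.38}) enters decisively, keeping the ellipticity constants in (\ref{1.15}) bounded away from zero.
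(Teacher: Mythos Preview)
Your overall blow-up architecture matches the paper's proof: indirect argument, rescaling with quadratic polynomials to normalize means, weak $W^{2,2}$-compactness, identification of the limit as a solution of the constant-coefficient system $\int_{B_1}D^2F(\Theta)(\nabla^2\hat u,\nabla^2\psi)\,dz=0$, Campanato decay for $\hat u$, and then the crucial upgrade to strong $L^2_{\mathrm{loc}}$-convergence of second gradients to derive the contradiction. The handling of the fidelity term via $r_k^2/\lambda_k\to 0$ is also in line with the paper.

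The gap is in your strong-convergence step. Your proposed ``Caccioppoli-type argument'' --- subtract Euler--Lagrange equations, test with $\eta^2(v_m-v)$, and invoke the lower bound in (\ref{1.15}) --- would require uniform ellipticity of the bilinear form $D^2F(A_m+s\lambda_m\nabla^2 v_m)$ in order to control $\int\eta^2|\nabla^2(v_m-v)|^2$. But (\ref{1.15}) is genuinely \emph{degenerate}: the lower bound is $(1+|Z|)^{-\mu}|X|^2$, and the argument $Z=A_m+s\lambda_m\nabla^2 v_m$ is \emph{not} uniformly bounded pointwise (only $\lambda_m\nabla^2 v_m\to 0$ in $L^2$ and a.e.). The assertion that ``$|A_m|\le L$ keeps the ellipticity constants bounded away from zero'' is therefore not justified; on the set where $\lambda_m|\nabla^2 v_m|$ is large the ellipticity collapses, and your Caccioppoli inequality yields only a \emph{weighted} bound which does not by itself give $L^2$-convergence of $\nabla^2(v_m-v)$. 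Also note that $P_m+\lambda_m r_m^2 v$ is not a solution of the nonlinear Euler--Lagrange equation, so there is no second equation to subtract in the way you describe.

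The paper overcomes exactly this difficulty by a different mechanism. First, it derives the weighted estimate (\ref{4.50}) --- namely $\int_{B_\rho}(1+|\Theta_k|+\lambda_k|\nabla^2\hat u|+\lambda_k|\nabla^2 w_k|)^{-\mu}|\nabla^2 w_k|^2\,dz\to 0$ --- not by differencing equations but by a second-order Taylor expansion of $F$ combined with the \emph{minimality} of $u$ and convexity, comparing $u$ with an explicit competitor built from $\hat u$. Second, to pass from this weighted control to genuine $L^2_{\mathrm{loc}}$-convergence, it splits $B_\rho$ into $[\lambda_k|\nabla^2 u_k|\le M]$ and its complement: on the first set the weight is bounded below and (\ref{4.50}) suffices; on the second set one uses the rescaled auxiliary functions $\varphi_k=\lambda_k^{-1}\bigl[(1+|\Theta_k+\lambda_k\nabla^2 u_k|)^{1-\mu/2}-(1+|\Theta_k|)^{1-\mu/2}\bigr]$, whose uniform local $W^{1,2}$-bound (inequality (\ref{4.52}), itself a consequence of Lemma~\ref{Lem42}) provides higher integrability via Sobolev and forces $\int_{[\lambda_k|\nabla^2 u_k|>M]}|\nabla^2 u_k|^2\,dz\to 0$. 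This two-region argument together with the a~priori estimate from Lemma~\ref{Lem42} is the missing ingredient in your proposal.
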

\begin{Rem}
\begin{enumerate}[(a)]\label{rembl}
\item Due to Lebesgue's differentiation theorem, condition (\ref{4.38}) is valid for $\mathcal{L}^2$-almost all points $x\in \Omega$, i.e. the set
\begin{align*}
 \Omega_0:=\left\{x\in\Omega\,:\,\limsup_{r\rightarrow 0}|(\hess u)_{x,r}|<\infty\right\}\cap\left\{x\in\Omega\,:\,\liminf_{r\rightarrow 0}E(x,r)=0\right\}
\end{align*}
 has full Lebesgue measure.
\item That in fact $\nabla^2 u$ is Hölder continuous on $\Omega_0$ and that $\Omega_0$ is an open subset of $\Omega$ follows from Lemma \ref{Bl} in a standard way, as e.g. outlined in detail on p. 95 ff. in the monograph \cite{Gia}:
by iteration, inequality (\ref{4.39}) yields ($0<\alpha<1$)
\begin{align*}
 E(x,\tau r)\leq \tau^{2\alpha}(E(x,r)+r),
\end{align*}
where $\tau$ is such that $C_*(L)\tau^{2-2\alpha}=1$. This implies
\begin{align*}
 E(x,\rho)\leq c \left(\frac{\rho}{r}\right)^{2\alpha}(E(x,r)+r)
\end{align*}
for all $\rho\leq r$ and from this inequality together with Morrey's integral characterization of Hölder continuity (cf. \cite{Gia}, chapter III,  Theorem 1.3) we get our claim.
\end{enumerate}
\end{Rem}

\noindent \textbf{\textit{Indirect proof of the Blow-up Lemma}}.
Fix $L>0$. If the statement of the lemma is false, then there is a $\tau\in(0,\frac{1}{2})$ and a sequence $B_{r_k}(x_k)\Subset\Omega$ of disks with
\begin{align}\label{4.40}
\left|(\hess u)_{x_k,r_k}\right|\leq L,\;E(x_k,r_k)+r_k=:\lambda^2_k\rightarrow 0,
\end{align}
but at the same time
\begin{align}\label{4.41}
E(x_k,\tau r_k)>\tau^2C_*(L)\lambda^2_k.
\end{align}
We rescale the function $u$ by setting
\begin{align*}
&a_k:=(u)_{x_k,r_k},\; A_k:=(\nabla u)_{x_k,r_k},\;\Theta_k=(\hess u)_{x_k,r_k},\\
&u_k(z):=\frac{1}{\lambda_kr_k^2}\left[u(x_k+r_kz)-r_kA_k\cdot z-a_k-\frac{r_k^2}{2}\Theta_k(z,z)+\frac{r_k^2}{2}\mint \limits_{B_1(0)}\Theta_k(y,y)dy\right],
\end{align*}
\label{defuk} where $z\in B_1(0)$.
These scalings are chosen in such a way that $(u_k)_{0,1}=0$, $(\nabla u_k)_{0,1}=0$, $(\hess u_k)_{0,1}=0$ and we further have
\begin{align*}
&\nabla u_k(z)=\frac{1}{\lambda_k r_k}\left[\nabla u(x_k+r_kz)-A_k-\frac{1}{2}r_k\nabla(\Theta_k^{\alpha\beta}z_\alpha z_\beta)\right],\\
&\nabla^2 u_k(z)=\frac{1}{\lambda_k}\left[\nabla^2u(x_k+r_kz)-\Theta_k\right]
\end{align*}
as well as
\begin{align}\label{4.42}
\mint\limits_{B_1(0)}|\nabla^2 u_k(z)|^2\dz=\lambda_k^{-2}E(x_k,r_k)\underset{(\ref{4.40})}{\leq}1.
\end{align}
The vanishing of the averages along with (\ref{4.42}) and Poincaré's inequality yields (after passing to a suitable subsequence)
\begin{align}\label{4.43}
u_k\rightharpoondown:\hat{u}\;\text{in }W^{2,2}(B_1(0))
\end{align}
and consequently
\begin{align}\label{4.44}
\lambda_k\nabla^2u_k\rightarrow 0\text{ in }L^2(B_1(0))\text{ and a.e.}
\end{align}
According to (\ref{4.40}) we further have
\begin{align}\label{4.45}
\Theta_k\rightarrow:\Theta
\end{align}
for a $2\times 2$-matrix $\Theta$. We claim that the function $\hat{u}$ fulfills the following constant coefficient elliptic system (implying the smoothness of $\hat{u}$):
\begin{align}\label{4.46}
\intop_{B_1(0)}D^2F(\Theta)(\nabla^2\hat{u},\nabla^2\psi)\dz=0\;\forall\psi\in C^\infty_0(B_1(0)).
\end{align}
 Fix $\psi$ and set $\varphi (x):=\psi\left(\frac{x-x_k}{r_k}\right)$, $x\in B_{r_k}(x_k)$. By the minimality of $u$, it holds
\begin{align*}
0=\underset{\mbox{$=:S_1$}}{\underbrace{\intop_{B_{r_k}(x_k)}DF(\nabla^2u):\nabla^2\varphi \dx}}+\lambda \underset{\mbox{$=:S_2$}}{\underbrace{\intop_{B_{r_k}(x_k)-D}(u-f)\varphi \dx}}
\end{align*}
 On any open subset $\Omega^*\Subset\Omega$, both $u$ and $f$ are bounded (recall $u\in W^{3,s}_{\text{loc}}(\Omega)$, $s<2$). Thus we can estimate $S_2$ by
\begin{align*}
|S_2|\leq c\intop_{B_{r_k}(x_k)}|\varphi|\dx=c\intop_{B_{r_k}(x_k)}\left|\psi\left(\frac{x-x_k}{r_k}\right)\right|\dx=cr_k^2\intop_{B_1(0)}|\psi(z)|\dz\\
\leq C(\psi)r_k^2.
\end{align*}
After  transformation the integral $S_1$ becomes
\begin{align*}
S_1&=\intop_{B_1(0)}DF(\Theta_k+\lambda_k\hess u_k):\hess \psi \dz\\
       &=\intop_{B_1(0)}\intop_0^1\frac{d}{ds}DF(\Theta_k+s\lambda_k\hess u_k):\hess\psi \,ds\,dz\\
       &=\intop_{B_1(0)}\intop_0^1D^2F(\Theta_k+s\lambda_k\hess u_k)(\hess u_k,\hess \psi)\lambda_k \,ds\,dz
\end{align*}
and together with our estimate for $S_2$ this yields
\begin{align}\label{4.47}
\Bigg|\intop_{B_1(0)}\intop_0^1D^2F(\Theta_k+s\lambda_k\hess u_k)(\hess u_k,\hess\psi)\,ds\,dz\Bigg|\leq C(\psi)\lambda_k^{-1}r_k^2.
\end{align}
Because of \label{null}(\ref{4.40}), $\frac{r_k}{\lambda_k}\leq\lambda_k\rightarrow 0$ for $k\rightarrow\infty$ and thus (note that $r_k$ is bounded) also $\frac{r_k^2}{\lambda_k}\rightarrow 0$. 
Now we turn to the left-hand side of (\ref{4.47}). Let $\delta>0$ be given. By (\ref{4.44}) and Egorov's Theorem there is a set $S\subset B_1(0)$ with $\mathcal{L}^2(B_1(0)-S)<\delta$ and $\lambda_k\hess u_k\rightrightarrows 0$ a.e. on $S$. With (\ref{4.43}) and (\ref{4.44}) it follows:
\begin{align*}
\intop_{S}\intop_0^1D^2F(\Theta_k+\lambda_k s\hess u_k)(\hess u_k,\hess \psi)\,ds\,dz\rightarrow\intop_{S}D^2F(\Theta)(\hess \hat{u},\hess\psi)\dz.
\end{align*}
At the same time, due to the boundedness of $D^2F$ and by Hölder's inequality we find 
\begin{align*}
&\Bigg|\intop_{B_1(0)-S}\intop_0^1 D^2F(\Theta_k+\lambda_k s\hess u_k)(\hess u_k,\hess\psi)\, ds\,dz\Bigg|\\
&\leq c\|\hess u_k\|_{2;B_1(0)}\|\hess \psi\|_{2;B_1(0)-S}\\
&\underset{(\ref{4.43})}{\leq}C\|\hess\psi\|_{2;B_1(0)-S}\leq C(\psi)\delta^{\frac{1}{2}},
\end{align*}
and since $\delta$ can be chosen arbitrarily small, this proves
\begin{align*}
\intop_{B_1(0)}\intop_0^1D^2F(\Theta_k+s\lambda_k\hess u_k)(\hess u_k,\hess\psi)\,ds\,dz\rightarrow\intop_{B_1(0)}D^2F(\Theta)(\hess\hat{u},\hess\psi)\dz
\end{align*}
and (\ref{4.46}) follows. We can therefore rely on the results in \cite{GM} and \cite{Kr} on  higher order elliptic systems (see also the comments subsequent to (3.10) in \cite{AF}) and find  a constant $C^*(L)$ such that
\begin{align}\label{4.48}
\mint\limits_{B_\tau(0)}|\hess\hat{u}-(\hess\hat{u})_{0,\tau}|^2\dz\leq C^*(L)\tau^2,
\end{align}
which together with the definition of $C_*(L)$ contradicts (\ref{4.41}) as soon as (\ref{4.43}) is improved towards
\begin{align}\label{4.49}
\hess u_k\rightarrow\hess \hat{u}\text{ in }L^2_\text{loc}(B_1(0)).
\end{align}
In fact, after scaling (\ref{4.41}) reads
\begin{align*}
\mint\limits_{B_\tau(0)}|\hess u_k-(\hess u_k)_{0,\tau}|^2 \dz=\lambda_k^{-2}E(x_k,\tau r_k)>\tau^2 C_*(L),
\end{align*}
and hence, along with (\ref{4.49})
\begin{align*}
\mint \limits_{B_\tau(0)}|\hess\hat{u}-(\hess\hat{u})_{0,\tau}|^2\dz\geq\tau^2 C_*(L).
\end{align*}
Therefore, in order to complete the proof of the Blow-up Lemma we need to verify (\ref{4.49}). To do this, we proceed just like in \cite{AF} and notice that we have (cf. (3.14) therein)
\begin{align}\label{4.50}
\lim_{k\rightarrow\infty}\intop_{B_\rho(0)}\big(1+|\Theta_k|+\lambda_k|\hess\hat{u}|+\lambda_k|\nabla^2 w_k|\big)^{-\mu}|\hess w_k|^2 \dz=0
\end{align}
where $\rho\in (0,1)$ and $w_k:=u_k-\hat{u}$. Since the derivation of (\ref{4.50}) is somewhat lengthy, we postpone its proof to the end and continue to establish (\ref{4.49}). 
Fix $\rho\in (0,1)$ and choose $M\geq 1$: it holds
\begin{align*}
 &\intop_{B_\rho(0)}|\hess w_k|^2\dz=\intop_{B_\rho(0)\cap\big[\lambda_k|\hess u_k|\leq M\big]}|\hess w_k|^2 \dz+\eps_k,\\
 &\eps_k:=\intop_{B_\rho(0)\cap\big[\lambda_k|\hess u_k|> M\big]}|\hess w_k|^2 \dz.
\end{align*}
Due to $\nabla^2\hat{u}\in L^\infty_{\text{loc}}(B_1(0))$ and the boundedness of the sequence $\Theta_k$ equation (\ref{4.50}) implies
\begin{align}\label{4.51}
 \lim_{k\rightarrow\infty}\intop_{B_\rho(0)\cap\big[\lambda_k|\nabla^2u_k|\leq M\big]}|\hess w_k|^2 \dz=0.
\end{align}
Let
\begin{align*}
\varphi_k:=\lambda_k^{-1}\left\{\big(1+|\Theta_k+\lambda_k\hess u_k|\big)^{1-\frac{\mu}{2}}-\big(1+|\Theta_k|\big)^{1-\frac{\mu}{2}}\right\},
\end{align*}
i.e. we consider the scaled version of $\varphi$ from (\ref{4.37}).
We claim the validity of 
\begin{align}\label{4.52}
\sup_k\intop_{B_\rho(0)}|\nabla\varphi_k|^2 \dz\leq c(\rho)<\infty.
\end{align}
Accepting this inequality for the moment, we further observe
\begin{align*}
|\varphi_k|=\lambda_k^{-1}\left|\intop_0^1\frac{d}{ds}\big(1+|\Theta_k+s\lambda_k\hess u_k|\big)^{1-\frac{\mu}{2}} \,ds\right|\leq c|\hess u_k|,
\end{align*}
so that (\ref{4.42}) implies the $L^2(B_1(0))$-boundedness of the $\varphi_k$ and thus
\begin{align}\label{4.53}
\sup_k\|\varphi_k\|_{1,2;B_\rho(0)}<\infty.
\end{align}
By the definition of the $\varphi_k$, for $M\geq M_0$ sufficiently large and independent of $k$ (note that the sequence $\Theta_k$ is bounded) we have on $B_\rho(0)\cap\big[\lambda_k|\hess u_k|>M\big]$
\begin{align*}
\varphi_k\geq \frac{1}{2}\lambda_k^{-1}\big(\lambda_k|\hess u_k|\big)^{1-\frac{\mu}{2}}=\frac{1}{2}\lambda_k^{-\frac{\mu}{2}}|\hess u_k|^{1-\frac{\mu}{2}}
\end{align*}
and therefore
\begin{align*}
|\hess u_k|^2\leq (2\varphi_k)^{\frac{4}{2-\mu}}\lambda_k^{\frac{2\mu}{2-\mu}}\text{ on }B_\rho(0)\cap\big[\lambda_k|\hess u_k|>M\big].
\end{align*}
According to (\ref{4.53}), $\varphi_k^{\frac{4}{2-\mu}}$ is uniformly integrable, so that
\begin{align*}
\intop_{B_\rho(0)\cap\big[\lambda_k|\hess u_k|>M\big]}|\hess u_k|^2\dz\rightarrow 0\text{ for }k\rightarrow\infty.
\end{align*}
From $\lambda_k|\hess u_k|\rightarrow 0$ a.e. (cf. (\ref{4.44})) it follows
\begin{align*}
\lim_{k\rightarrow\infty}\intop_{B_\rho(0)\cap\big[\lambda_k|\hess u_k|>M\big]}|\hess\hat{u}|\dz=0,
\end{align*}
and (\ref{4.51}) implies $\hess w_k\rightarrow 0$ in $L^2_{\text{loc}}(B_1(0))$, i.e. (\ref{4.49}) holds which proves part $(c)$ of Theorem \ref{Thm1.4} as soon as (\ref{4.50}) and (\ref{4.52}) have been established, what is to follow next.

\noindent\textbf{\textit{ad (\ref{4.52})}}. We return to the Euler equation (\ref{4.19}) choosing $v:=\eta^6\palpha(\udel-P)$, where $P$ denotes a polynomial of degree $\leq 2$ and $\eta$ is specified in the same way as after (\ref{4.19}). We obtain
\begin{align*}
 &\intop_{B_{2r}(x_0)}\eta^6D^2F_\delta(\nabla^2\udel)\big(\palpha\hess\udel,\palpha\hess\udel\big)\dx\\
 &=-\intop_{B_{2r}(x_0)}D^2F_\delta(\nabla^2\udel)\big(\palpha\hess\udel,\nabla^2\eta^6 \palpha[\udel-P]+2\nabla\eta^6\otimes\nabla\palpha(\udel-P)\big)\dx\\
 &\hspace{6,25cm}+\lambda\intop_{B_{2r}(x_0)-D}(\udel-f)\palpha\big(\eta^6\palpha(\udel-P)\big)\dx.
\end{align*}
From (\ref{4.15}) it follows $\udel\in L^\infty_\text{loc}(\Omega)$ uniformly. Applying the Cauchy-Schwarz inequality to the first integral on the right-hand side and using the boundedness of $D^2F$ we get the following estimate, which corresponds to inequality (2.11) in \cite{AF}:
\begin{align*}
 &\intop_{B_{2r}(x_0)}\eta^6D^2F_\delta(\nabla^2\udel)\big(\palpha\hess\udel,\palpha\hess\udel\big)\dx\\
 &\leq c\Bigg\{r^{-4}\intop_{B_{2r}(x_0)}|\nabla(\udel-P)|^2\dx+r^{-2}\intop_{B_{2r}(x_0)}|\nabla^2(\udel-P)|^2\dx\\
 &\hspace{4.5cm}+\intop_{B_{2r}(x_0)}|\nabla^2(\udel-P)|\dx+r^{-1}\intop_{B_{2r}(x_0)}\big|\nabla(\udel-P)\big|\dx\Bigg\}\\
 &\leq c\Bigg\{r^{-4}\intop_{B_{2r}(x_0)}|\nabla(\udel-P)|^2\dx+r^{-2}\intop_{B_{2r}(x_0)}|\nabla^2(\udel-P)|^2\dx+r^2\Bigg\},
\end{align*}
where we have applied Young's inequality. We pass to the limit $\delta\rightarrow 0$ (recall the convergences stated in front of (\ref{4.37})) and get by the boundedness of $D^2F$ and lower semicontinuity 
\begin{align*}
 \intop_{B_r(x_0)}&|\nabla \varphi|^2 \dx\\
 &\leq c\left\{ r^{-4}\intop_{B_{2r}(x_0)}|\nabla u-\nabla P|^2 \dx+r^{-2}\intop_{B_{2r}(x_0)}|\hess u-\hess P|^2\dx+r^2\right\}.
\end{align*}
Now, if $\rho\in (0,1)$ and if we choose in the beginning $\eta\equiv 1$ on $B_{\rho r}(x_0)$, $\text{spt}\,\eta\subset B_r(x_0)$, etc., then it is clear that in place of the latter inequality we obtain
\begin{align}\label{4.50'}
\begin{split}
 &\intop_{B_{\rho r}(x_0)}|\nabla\varphi|^2 \dx\\
 &\leq c(\rho)\left\{r^{-4}\intop_{B_r(x_0)}|\nabla u-\nabla P|^2 \dx+r^{-2}\intop_{B_r(x_0)}|\hess u-\hess P|^2 \dx +r^2\right\}.
\end{split}
\end{align}
Since 
\begin{align*}
 \intop_{B_\rho(0)}|\nabla\varphi_k|^2 \dz =\lambda_k^{-2}\intop_{B_{\rho r_k}(x_k)}|\nabla \varphi|^2 \dx,
\end{align*}
and $\lambda_k^{-2}r_k^2\rightarrow 0$ it is now immediate that (\ref{4.52}) is a consequence of (\ref{4.50'}) and the properties of the sequence $u_k$ provided we choose the polynomial $P$ as done in the definition of the functions $u_k$, i.e.\label{pk}
\begin{align*}
 P_k(z):=r_kA_k\cdot z-a_k-\frac{1}{2}r_k^2\Theta_k(z,z)+\frac{1}{2}r_k^2\mint \limits_{B_1(0)}\Theta_k(y,y)dy.
\end{align*}

\noindent\textbf{\textit{ad (\ref{4.50})} }. Take a cut-off function $\eta\in C_0^\infty(B_1(0))$, $0\leq\eta\leq 1$. Then a Taylor expansion yields
\begin{align}\label{4.54}
\begin{rightcases}
&\lambda_k^{-2}\intop_{B_1(0)}\eta  \bigg[F(\Theta_k+\lambda_k\nabla^2 u_k)-F(\Theta_k+\lambda_k\hess\hat{u})\bigg]\dz\\
&\hspace{4,55cm}-\lambda_k^{-1}\intop_{B_1(0)}\eta DF(\Theta_k+\lambda_k\nabla^2\hat{u}):\hess w_k \dz\\
&\hspace{-0,8cm}=\intop_{B_1(0)}\intop_{0}^1\eta D^2F\big(\Theta_k+\lambda_k\hess\hat{u}+s\lambda_k\hess w_k\big)(\hess w_k,\hess w_k)(1-s)\,ds\,dz\\
\end{rightcases}
\end{align}
\begin{align*}
 &\underset{(\ref{1.15})}{\geq} c\intop_{B_1(0)}\intop_{0}^1\eta\big(1+|\Theta_k+\lambda_k\nabla^2\hat{u}+s\lambda_k\hess w_k|\big)^{-\mu}|\hess w_k|^2(1-s) \,ds\,dz.
\end{align*}

If we just consider the integral from $s=0$ to $s=\frac{1}{2}$ instead of  the whole interval $[0,1]$ in the latter term, it follows
\begin{align}
 \text{r.h.s. of (\ref{4.54})}\geq c\intop_{B_1(0)}\eta \big(1+|\Theta_k|+\lambda_k|\nabla^2\hat{u}|+\lambda_k|\hess w_k|^2\big)^{-\mu}|\hess w_k|^2\dz
\end{align}
and (\ref{4.50}) will follow once we have shown that the left-hand side of (\ref{4.54}) converges to zero as $k\rightarrow\infty$. Note that obviously the left-hand side of (\ref{4.54}) is nonnegative and hence it suffices to give a suitable upper bound.
To this purpose we make use of the convexity of $F$ and obtain
\begin{align*}
&\eta F(\Theta_k+\lambda_k\hess u_k)-\eta F(\Theta_k+\lambda_k\hess \hat{u})\\
\,\\
&=\eta F(\Theta_k+\lambda_k\hess u_k)-\Big\{\eta F(\Theta_k+\lambda_k\hess\hat{u})+(1-\eta)F\big(\Theta_k+\lambda_k\hess u_k\big)\Big\}\\
&\hspace{9cm}+(1-\eta)F(\Theta_k+\lambda_k\hess u_k)\\
\,\\
&\leq \eta F(\Theta_k+\lambda_k\hess u_k)+(1-\eta)F(\Theta_k+\lambda_k\hess u_k)\\
&\hspace{7,1cm}-F(\Theta_k+\eta\lambda_k\hess\hat{u}+(1-\eta)\lambda_k\hess u_k)\\
\,\\
&=F(\Theta_k+\lambda_k \hess u_k)-F\left(\Theta_k+\lambda_k\left[\eta \hess \hat{u}+(1-\eta) \hess u_k\right]\right),
\end{align*}
hence it follows
\begin{align}\label{4.55}
\begin{rightcases}
&\text{l.h.s. of (\ref{4.54})}\\
&\leq \lambda_k^{-2}\intop_{B_1(0)}F(\Theta_k+\lambda_k\hess u_k)\dz\\
&-\lambda_k^{-2}\intop_{B_1(0)}F\left(\Theta_k+\lambda_k\left[\eta \hess\hat{u}+(1-\eta)\hess u_k\right]\right) \dz\\
&-\lambda_k^{-1}\intop_{B_1(0)}\eta DF(\Theta_k+\lambda_k\hess\hat{u}):\hess w_k \dz=: I_1-I_2-I_3.
\end{rightcases}
\end{align}

The minimality of $u$ implies
\begin{align}\label{4.46'}
\begin{split}
 &I_1=\lambda_k^{-2}\intop_{B_1(0)}F\big(\hess u(r_kz+x_k)\big)\dz=\lambda_k^{-2}r_k^{-2}\intop_{B_{r_k}(x_k)}F(\hess u)\dx\\
 &\leq \lambda_k^{-2}r_k^{-2}\Bigg\{\intop_{B_{r_k}(x_k)}F(\hess v)\dx+\frac{\lambda}{2}\intop_{B_{r_k}(x_k)-D}|v-f|^2 \dx\\
 &\hspace{5cm}-\frac{\lambda}{2}\intop_{B_{r_k}(x_k)-D}|u-f|^2 \dx\Bigg\}
\end{split}
\end{align}
for all $v\in W^{2,1}(\Omega)\cap L^2(\Omega-D)$ with $\text{spt}\,(u-v)\Subset B_{r_k}(x_k)$. Now set
\begin{align*}
 v_k(z):=u_k(z)+\eta(z)(\hat{u}-u_k)(z),\;z\in B_1(0),
\end{align*}
and define
\begin{align*}
 \tilde{v}_k(z):=\lambda_k r_k^{2}v_k(z)+P_k(z)
\end{align*}
with $P_k(z)$ as introduced after (\ref{4.50'}). Finally we let
\begin{align*}
 \hat{v}_k(x):=\tilde{v}_k\left(\frac{x-x_k}{r_k}\right)
\end{align*}
with the result that $\text{spt}\,\big(u-\tilv\big)\Subset B_{r_k}(x_k)$ as well as
\begin{align*}
\hess \tilv(x)&=r_k^{-2}\hess \tilde{v}_k\left(\frac{x-x_k}{r_k}\right)\\
&=\lambda_k\hess v_k\left(\frac{x-x_k}{r_k}\right)+\nabla^2P_k\left(\frac{x-x_k}{r_k}\right)r_k^{-2}\\
&=\lambda_k\hess\Big\{u_k+\eta(\hat{u}-u_k)\Big\}\left(\frac{x-x_k}{r_k}\right)+\Theta_k,
\end{align*}
which means that we have
\begin{align*}
 &\lambda_k^{-2} r_k^{-2}\intop_{B_{r_k}(x_k)}F(\hess \tilv) \dx\\
 &=\lambda_k^{-2} \intop_{B_1(0)}F\big(\Theta_k+\lambda_k \hess\big\{u_k+\eta(\hat{u}-u_k)\big\}\big) \dz.
\end{align*}
Going back to (\ref{4.46'}) it follows
\begin{align}\label{4.56}
\begin{split}
 &I_1\leq\overset{\mbox{$=:\widetilde{I}_1$}}{\overbrace{\lambda_k^{-2} \intop_{B_1(0)}F\big(\Theta_k+\lambda_k\hess\big\{u_k+\eta(\hat{u}-u_k)\big\}\big) \dz}}\\
 &\hspace{4cm}+\frac{\lambda}{2}\lambda_k^{-2}r_k^{-2}\intop_{B_{r_k}(x_k)-D}\big(|f-\tilv|^2-|f-u|^2\big)\dx.
\end{split}
\end{align}
Using that $f$ is in $L^\infty(\Omega-D)$, we can estimate the second term on the right-hand side of (\ref{4.56}) by
\begin{align*}
 &\lambda_k^{-2}r_k^{-2}\intop_{B_{r_k}(x_k)-D}\big(|f-\tilv|^2-|f-u|^2\big)\dx\\
 &\leq c \lambda_k^{-2}r_k^{-2}\intop_{B_{r_k}(x_k)}|\tilv -u|\underset{\mbox{bounded!}}{\underbrace{(1+|u|+|\tilv|)}}\dx\\
 &\leq c\lambda_k^{-2}r_k^{-2}\intop_{B_{r_k}(x_k)}|\tilv-u| \dx\\
 &=c\lambda_k^{-2}\intop_{B_1(0)}|\tilde{v}_k(z)-u(x_k+r_kz)| \dz\\
 &=c\lambda_k^{-2}\intop_{B_1(0)}|\lambda_k r_k^2v_k(z)+P_k(z)-u(x_k+r_kz)| \dz\\
 &=c\lambda_k^{-1}r_k^2\intop_{B_1(0)}\bigg|v_k(z)-r_k^{-2}\lambda_k^{-1}\bigg(\big(u(x_k+r_kz)-P_k(z)\big)\bigg)\bigg| \dz\\
 &=c\lambda_k^{-1}r_k^2\intop_{B_1(0)}|v_k-u_k| \dz\\
 &=c\lambda_k^{-1}r_k^2\intop_{B_1(0)}\eta|\hat{u}-u_k| \dz\\
 &\leq c\lambda_k^{-1}r_k^2=:\eps_k\rightarrow 0 \text{ as }k\rightarrow\infty,
\end{align*}
where the last estimate uses (\ref{4.43}). The above estimate combined with (\ref{4.55}) and (\ref{4.56}) now implies
\begin{align}\label{4.57}
 \text{l.h.s. of (\ref{4.54})}\leq \widetilde{I}_1-I_2-I_3+\frac{\lambda}{2}\eps_k.
\end{align}
Starting from (\ref{4.57}), we can follow the arguments in \cite{AF}, p. 209, very closely: let
\begin{align*}
 X_k:=\Theta_k+\lambda_k\big[(1-\eta)\hess u_k+\eta \hess\hat{u}\big],\;Z_k:=2\nabla\eta \otimes\nabla(\hat{u}-u_k)+\hess\eta (\hat{u}-u_k)
\end{align*}
and observe that by another Taylor expansion we have
\begin{align*}
 \widetilde{I}_1-I_2&=\lambda_k^{-1}\intop_{B_1(0)}DF(X_k):Z_k \dz\\
 &+\intop_{B_1(0)}\intop_0^1 D^2F\big(X_k+s\lambda_k Z_k\big)(Z_k,Z_k)(1-s)\,ds\,dz\\
 &\leq \lambda_k^{-1}\intop_{B_1(0)}DF(X_k):Z_k \dz +c\intop_{B_1(0)}|Z_k|^2 \dz,
\end{align*}
 since $D^2 F$ is bounded. By the convergence $u_k\rightarrow \hat{u}$ in $W^{1,2}(B_1(0))$ (recall (\ref{4.43})) we find that $c\intop_{B_1(0)}|Z_k|^2 \dz$ converges to zero and (\ref{4.57}) implies
\begin{align}\label{4.58}
\begin{split}
&\begin{rightcases}
&\text{l.h.s. of (\ref{4.54})}\\
&\leq \lambda_k^{-1}\intop_{B_1(0)}DF(X_k):Z_k \dz\\
&-\lambda_k^{-1}\intop_{B_1(0)}\eta DF(\Theta_k+\lambda_k\hess\hat{u}):\hess w_k \,dz+\eps_k
\end{rightcases}
\end{split}
\end{align}
\begin{align*}
&=\lambda_k^{-1}\intop_{B_1(0)}\big(DF(X_k)-DF\big(\Theta_k+\lambda_k\hess\hat{u}\big)\big):Z_k \dz\\
&-\lambda_k^{-1}\intop_{B_1(0)}DF(\Theta_k+\lambda_k\hess\hat{u}):\hess(\eta w_k)\dz+\eps_k\\
&=:\lambda_k^{-1}I_4-\lambda_k^{-1}I_5+\eps_k
\end{align*}
for another appropriate sequence $\eps_k\rightarrow 0$. With the same idea as in \cite{AF}, p. 211, the integral $I_4$ can be estimated by
\begin{align*}
&\lambda_k^{-1}I_4=\intop_{B_1(0)}\intop_0^1D^2F(\Theta_k+\lambda_k\hess\hat{u}+s\lambda_k(1-\eta)\hess w_k)(\hess w_k, Z_k)(1-\eta) \,ds\,dz\\
&\underset{\text{Hölder}}{\leq} c\underset{\mbox{bounded by (\ref{4.43})}}{\underbrace{\|\hess w_k\|_{2;B_1(0)}}}\;\underset{\hspace{0,5cm}\mbox{$\rightarrow 0$ by (\ref{4.43})}}{\underbrace{\|Z_k\|_{2;B_1(0)}}},
\end{align*}
moreover we have
\begin{align*}
\lambda_k^{-1}I_5&=\lambda_k^{-1}\intop_{B_1(0)}\intop_0^1D^2F\big(\Theta_k+s\lambda_k\hess\hat{u}\big)\big(\lambda_k\hess\hat{u},\hess(\eta w_k)\big) \,ds\,dz\\
&=\intop_{B_1(0)}\intop_0^1 D^2F\big(\Theta_k+s\lambda_k\hess\hat{u}\big)\big(\hess\hat{u},\hess(\eta w_k)\big) \,ds\,dz\\
&\longrightarrow 0\text{ for } k\rightarrow\infty,
\end{align*}
according to (\ref{4.43}) and consequently
\begin{align*}
\text{r.h.s. of (\ref{4.58})}\longrightarrow 0,\;k\rightarrow\infty,
\end{align*}
which finally proves $(\ref{4.50})$.\\
To prove the assertion concerning the Hausdorff-dimension of the singular set we notice that we have (see Remark \ref{rembl} (a))
\begin{align*}
&\Omega-\Omega_0=\left\{x\in\Omega\,:\,\limsup_{r\rightarrow 0}|(\hess u)_{x,r}|=\infty\right\}\\
&\hspace{6cm}\cup\left\{x\in\Omega\,:\,\liminf_{r\rightarrow 0}E(x,r)>0\right\}=:\Omega_1\cup\Omega_2.
\end{align*}
Since $\hess u\in W^{1,s}_{\text{loc}}(\Omega)$ for all $ s<2$ (compare the calculations after (\ref{4.16})), it follows from \cite{Gia}, Theorem 2.1, p. 100, that
\begin{align*}
 \mathcal{H}^{2-s+\kappa}(\Omega_1)=0\quad\forall \kappa>0
\end{align*}
and hence $\mathcal{H}^\eps(\Omega_1)=0$ for $\eps>0$ arbitrarily small. Further (using the Sobolev-Poincaré inequality) it holds
\begin{align*}
 E(x,r)^\frac{1}{2}\leq cr\mint\limits_{B_r(x)}|\nabla^3 u|dy\leq c\left(r^{s-2}\intop_{B_r(x)}|\nabla^3 u|^s dy\right)^\frac{1}{s},
\end{align*}
and \cite{Gia}, Theorem 2.2, p. 101, implies that  $|\nabla^3 u|\in L^s_{\text{loc}}(\Omega)$ together with the above estimate yields $\mathcal{H}^{2-s}(\Omega_2)=0$ and thus $\mathcal{H}^\eps(\Omega_2)=0$ for any $\eps>0$ which concludes the proof of Theorem \ref{Thm1.4} (c). It remains to give the 

\noindent\textbf{\textit{proof of Theorem \ref{Thm1.4} part (d)}}: Let $\sigma_\delta:=DF_\delta(\hess \udel)$. By part (b) of Lemma \ref{Lem41} we have
\begin{align*}
 \palpha \sigma_\delta:\palpha \sigma_\delta&=D^2F_\delta(\hess\udel)\big(\palpha \hess\udel,\palpha\sigma_\delta\big)\\
 &\leq D^2F_\delta(\hess\udel)\big(\palpha\hess\udel,\palpha\hess\udel\big)^\frac{1}{2}D^2F_\delta(\hess\udel)\big(\palpha\sigma_\delta,\palpha\sigma_\delta\big)^\frac{1}{2}\\
 \end{align*}
so that $|\nabla\sigma_\delta|^2\leq \Phi_\delta \big|D^2F_\delta(\hess\udel)\big|^\frac{1}{2}|\nabla\sigma_\delta|$ where we have abbreviated 
\begin{align*}
 \Phi_\delta:=D^2F_\delta(\hess\udel)\big(\palpha\hess\udel,\palpha\hess\udel\big)^\frac{1}{2}.
\end{align*}
According to inequality (\ref{est428}) we know
\begin{align*}
 \Phi_\delta\in L^2_{\text{loc}}(\Omega)\quad\text{uniformly in }\delta.
\end{align*}
By the condition of $\mu$-ellipticity (\ref{1.15}) it holds $\big|D^2F_\delta(Z)\big|\leq c(1+\delta)$ and thus
\begin{align*}
 |\nabla\sigma_\delta|\leq c(1+\delta)\Phi_\delta
\end{align*}
which implies $\sigma_\delta\in W^{1,2}_{\text{loc}}(\Omega)\quad\text{uniformly in }\delta$, and in particular $\sigma_\delta\rightharpoondown:\overline{\sigma}$ in $W^{1,2}_\text{loc}(\Omega)$. But then it follows $\overline{\sigma}=\sigma$, since (due to $u_\delta\in W^{3,s}_\text{loc}(\Omega)$ uniformly for $s<2$) $\nabla^2\udel\rightarrow\nabla^2 u$ a.e., which finishes the proof of part (d) and thereby the proof of Theorem \ref{Thm1.4}. \qed
\end{section}

\vspace{1cm}

\begin{tabular}{l l}
Martin Fuchs & \hspace{2cm}Jan Müller\\
Saarland University & \hspace{2cm}Saarland University\\
Department of Mathematics & \hspace{2cm}Department of Mathematics\\
P.O. Box 15 11 50 & \hspace{2cm}P.O. Box 15 11 50\\
66041 Saarbrücken & \hspace{2cm}66041 Saarbrücken\\
Germany & \hspace{2cm}Germany\\
fuchs@math.uni-sb.de & \hspace{2cm}jmueller@math.uni-sb.de
\end{tabular}
\end{document}